\newcommand{\bfA}{\mathcal{A}}
\newcommand{\bs}{\boldsymbol}
\newcommand{\e}{{\mathbb E}}
\newcommand{\p}{\mathbb P}
\newcommand{\D}{\mathrm d}
\newcommand{\levy}{L\'{e}vy }
\newcommand{\R}{\mathbb R}
\renewcommand{\Re}{{\rm Re}}
\newcommand{\diag}{\mathrm {diag}}
\newcommand{\ind}[1]{\mbox{\rm\large  1}_{\{#1\}}}
\newcommand{\oo}[1]{\overline{#1}}
\renewcommand{\a}{\alpha}
\renewcommand{\b}{\beta}
\newcommand{\G}{\Gamma}
\newcommand{\s}{\sigma}
\newcommand{\bv}{{\bs v}}
\newcommand{\bh}{{\bs h}}
\newcommand{\bb}{{\bs b}}
\newcommand{\1}{{\bs 1}}
\newcommand{\bpi}{{\bs \pi}}
\newcommand{\matI}{\mathbb{I}}
\newcommand{\ee}{{\mathrm e}}
\def\te#1{\mathrm{e}^{#1}}
\newcommand{\ct}{{\bf The non-lattice case}}
\newcommand{\dt}{{\bf The lattice case}}
\newtheorem{theorem}{Theorem}
\newtheorem{cor}{Corollary}
\newtheorem{lemma}{Lemma}
\newtheorem{prop}{Proposition}
\newtheorem{remark}{Remark}
\title{Ruin Probabilities for Risk Process in a Regime Switching Environment}
\author[Z. Palmowski]{Zbigniew Palmowski}
\address{ Faculty of Pure and Applied Mathematics,
Wroc\l aw University of Science and Technology,
Wyb. Wyspia\'nskiego 27, 50-370 Wroc\l aw, Poland}
\email{zbigniew.palmowski@pwr.edu.pl}
\thanks{This work is partially supported by Polish National Science Centre Grant No. 2018/29/B/ST1/00756, 2019-2022}
\date{\today}
\subjclass[2010]{
Primary 60K20;
Secondary 60K05, 91B30}
\begin{document}

\begin{abstract}
In this paper we give few expressions and asymptotics of ruin probabilities
for a Markov modulated risk process for various regimes
of a time horizon, initial reserves and a claim size distribution.
We also consider few versions of the ruin time.

\vspace{3mm}

\noindent {\sc Keywords.} ruin time $\star$ asymptotics $\star$ change of measure $\star$ Cram\'er asymptotics $\star$
subexponential distribution $\star$ Central Limit Theorem

\end{abstract}

\maketitle

\pagestyle{myheadings} \markboth{\sc Z.\ Palmowski
} {\sc Ruin Probabilities and Regime Switching}




\section{Introduction}\label{sec:iar}
This paper concerns  the ruin probabilities
in the context of regime switching environment. The risk theory is
substantial for insurance mathematics and has been subject
of many interest since Lundberg. There is a good deal of work related
with this theory and there are many great books describing it; see e.g.
\citet{asm_ruin, rolskibook, AndreasGerberShiu}. The understanding of so-called Markov modulated risk process is
also deep; see Chap. VII of \citet{asm_ruin}.
The need of Markov modulation is not questionable nowadays. It can model
the state of the roads in the case of car insurance, economic environments
or insurance claims with a higher degree of complexity insofar as claim frequencies and/or
severities are concerned; see e.g. \citet{Dawid} and references therein.

Still, we believe that there is a need of the short survey related with these results
and with few additional new facts that have not been proved yet.
In particular, we give counterpart of Arfwedson Theorem for the Markov modulated
risk process. We also give a subexponential asymptotics of the Parisian probability of ruin.
This is our first goal.
Second main goal is the ordering of few key results with the unified use of so-called scale matrices introduced
by \citet{ivanovs_palmowski}. This part of the paper can be then treated as a Markov modulated version of
some facts presented in \citet{AndreasGerberShiu}. Due to Markov modulation, some additional care is required here though.

The paper is organized as follows. In the next section we formally introduce the risk process that we work with.
In Section \ref{sec:prel} we give few main facts concerning the scale matrices.
In Section \ref{Ger:sec} we present the key compensation formula and
related representations of Gerber-Shiu function. Section \ref{sec:diff}
gives the ordinary differential equation that Gerber-Shiu function solves.
In Sections \ref{sec:Cram} and \ref{sec:subexpnentialasruinprob} we present the Cram\'er
and subexponential asymptotics, respectively. In Sections \ref{sec:seg} and \ref{sec:Hog}
we  focus on Segerdahl's and H\"{o}glund's approximations of the finite-time ruin probability.
The last section is dedicated to Parisian ruin probability.

\section{Gerber-Shiu function}
The goal of this paper is to analyse various ruin probabilities
of a Markov modulated risk process.
To describe this process properly we start from introducing a random environment that is
given by a continuous time Markov chain $J_t$ living on the state space $E=\{1, 2, \ldots N\}$.
Let $T_k$ denote successive jump epochs of the Markov chain $J_t$.
Then the risk process under consideration is given by
\begin{equation}\label{riskMM3}
X_t=x+\int_0^tp_{J_u}\D u -\sum_{k=1}^{N_t} C_k^{(J_t)} -\sum_{T_k\leq t} C_k^{(J_{T_k-}, J_{T_k})}.
\end{equation}
Above, $x$ describes an initial capital, $N_t$ is a Markov modulated Poisson process with an arrival intensity $\lambda_i$ at time $t$
when $J_t=i$ determining the arrival of i.i.d. claims $\{C_k^{(i)}\}$ which are conditionally independent of $N_t$ and having distribution
$F^{(i)}_C$
depending on the state $i\in E$ of the environmental Markov chain $J$ at time $t$.
Apart of it we have possible claims $C_k^{(ij)}$, $i,j\in E$, appearing when the environmental Markov chain changes its state.
The vector $(p_1, p_2, \ldots, p_N)$ is a vector of premium intensities.
In the following we assume that the processes $X$ and $J$ are defined on a common
filtered probability space $(\Omega, \mathcal{F}, \{\mathcal{F}_t\}_{t\ge 0}, \p)$
and we use $\p_{x,i}$ to denote the law of $(X,J)$ given
$\{X_0=x, J_0=i\}$ and by $\p_i$ and $\p_x$ we denote the law of $(X,J)$ given
$\{X_0=0, J_0=i\}$ and $\{X_0=x\}$, respectively.
The appropriate expectations we will denote by  $\e_{x,i}$, $\e_i$ and $\e_x$.
We also write $\e[Z;J_t]$, where $Z$ is some random
variable, to denote the $N\times N$ matrix with entries
$\e_i[Z;J_t=j]=\e_i[Z\ind{J_t=j}].$ Finally, we denote $\p(\cdot)=\p_0(\cdot)$ and $\e[\cdot]=\e_0[\cdot]$.

We will assume throughout of this work the following net profit condition
\begin{equation}\label{netprofit}
\e_{i}X_1>0,\qquad \text{for all $i\in E,$}
\end{equation}
under which risk process $X$ tends to infinity a.s.

Observe that $X_t$ is a spectrally negative Markov Additive Process (MAP) with the matrix exponent
\begin{equation}\label{Fforriskprocess}
F(\a)=
\diag \left(p_i\a +\lambda_i (\e e^{-C^{(i)}\a}-1)\right)_{\{i\in E\}} +(q_{ij}\e e^{-C^{(ij)}\a})_{\{i,j\in E\}},
\end{equation}
where $\alpha \ge 0$, $C^{(i)}$ and $C^{(ij)}$ are generic random variables representing
claims $C^{(i)}_k$ and $C^{(ij)}_k$, respectively, and $Q=(q_{ij})_{\{i,j\in E\}}$ is an intensity matrix of $J$.
In other words,
\[\e[e^{\a X_t};J_t]=e^{F(\a)t}.\]

The main object of the study is expected discounted penalty function (EDPF)
called a Gerber-Shiu function as well and defined by
\begin{equation}\label{GerberShiuFunction}
\phi_{w,ij}^{(q)}(x)=\e_{x,i} [e^{-q\tau_0^-}w(X_{\tau_0^--}, |X_{\tau_0^-}|); J_{\tau_0^-}=j, \tau_0^-<\infty]
\end{equation}
for the ruin time
\begin{equation}\label{ruintime}
\tau_0^-=\inf\{t\ge 0: X_t<0\}.\end{equation}
Above, $q\ge 0$ is a discounting factor and $w$ is a bivariate non-negative penalty function. This function describes the penalty that is paid at the moment of ruin.
It depends on the position prior ruin  $X_{\tau_0^--}$ and the deficit $|X_{\tau_0^-}|$ at the ruin moment.
If $w\equiv 1$ then Gerber-Shiu function is a discounted ruin probability:
\begin{equation}\label{ruinprobability}
\phi_{1,ij}^{(q)}=\e_{x,i} [e^{-q\tau_0^-}; J_{\tau_0^-}=j, \tau_0^-<\infty].\end{equation}
The most common case is when $q=0$ and then
\begin{equation}\label{ruinprobdef}
\phi_{ij}(x)=\phi_{1,ij}^{(0)}(x)=\p_{x,i}(\tau_0^-<\infty, J_{\tau_0^-}=j)\end{equation}
 is the ruin probability.
We will write $\phi_{w,i}^{(q)}(x)=\sum_{j\in E}\phi_{w,ij}^{(q)}(x)$, $\phi_{1,i}^{(q)}(x)=\sum_{j\in E}\phi_{1,ij}^{(q)}(x)$
and
\begin{equation}\label{ruinprobdef2}
\phi_{i}(x)=\sum_{j\in E}\phi_{ij}(x)=\p_{x,i}(\tau_0^-<\infty).
\end{equation}
In most of the cases $C^{(ij)}\equiv 0$ for any $i,j\in E$, that is, when
Markov chain changes its state there are no additional claims appearing in reserve process.
In this case
\begin{equation}\label{riskMM2}
X_t=x+\int_0^tp_{J_u}\D u -\sum_{k=1}^{N_t} C_k^{(J_t)},
\end{equation}

There is a huge amount of literature devoted to the Gerber-Shiu
functions and ruin theory. For more detailed discussions, we can refer
to \citet{asm_ruin, rolskibook, AndreasGerberShiu}.

\section{Prelimiaries}\label{sec:prel}
We follow here mainly~\citet{ivanovs_palmowski}.
Define an $N\times N$ matrix-valued function
$W^{(q)}(x)$ which is continuous for $x\ge 0$ and is identified by the
following Laplace transform:
\begin{equation}\label{Wq}
\int_0^\infty e^{-\a x}W^{(q)}(x)\D x=(F(\a)-q\matI)^{-1},\qquad \a>\max\{\Re(\lambda):\det (F(\lambda)-\lambda q\matI)=0\},\end{equation}
where $\matI$ is the identity matrix.
The matrix $W^{(q)}(x)$ is invertible for $x>0$ and satisfies
\begin{align}
&\e_x[e^{-q\tau_a^+}; \tau_x^+<\tau_{0}^-,J_{\tau_a^+}]=W^{(q)}(x)W^{(q)}(a)^{-1}\label{eq:two-sidedq}
\end{align}
for $0\le x\le a$,
where
$$\tau_a^+=\inf\{t\ge 0: X_t>a\}.$$
In the case of $N=1$ this results corresponds to~\citet[Thm.~1]{kyprianoupalmowski}.

We also define
\[Z^{(q)}(\a,x)=e^{\a x}\left(\matI-\int_0^xe^{-\a y}W^{(q)}(y)\D y(F(\a)-q\matI)\right)\text{ for }\a,q,x\ge 0\]
and when $\a=0$ we denote
\begin{equation}\label{secsc}
Z^{(q)}(x)=\matI-\int_0^xW^{(q)}(y)\D y(F(0)-q\matI)=\matI-\int_0^xW^{(q)}(y)\D y(Q-q\matI).
\end{equation}
It is so-called second scale matrix.
We will also need a matrix $R$
being a \emph{left solution} of $$F(-R^{(q)})=q\matI.$$
In case when $N=1$ we have
$R=-\Phi(q)$
where $\Phi(q)$ is a right-inverse of the Laplace exponent $\psi(\alpha)=F(\alpha)$ of spectrally negative L\'evy process $X$
(hence $\psi(\Phi(q))=q$).
Then for all $x,\a\ge 0$ and $q>0$
\begin{equation}\label{2sideddown}\e_x[e^{-q\tau_0^-+\a X_{\tau_0^-}};\tau_0^-<\tau_a^+,J_{\tau_0^-}]=Z^{(q)}(\a,x)-W^{(q)}(x)W^{(q)}(a)^{-1}Z^{(q)}(\a,a).\end{equation}
Hence by taking $a \rightarrow+\infty$ and assuming that $R^{(q)} +\a\matI$ is
non-singular, it holds that
\begin{equation*}
\e_x[e^{-q\tau_0^-+\a X_{\tau_0^-}};J_{\tau_0^-}]=Z^{(q)}(\a,x)-W^{(q)}(x)(R^{(q)}+\a\matI)^{-1}(F(\a)-q\matI).\end{equation*}
Above result can be extended to $q=0$ by taking
the limits as $q\downarrow 0$. Moreover, it is noted that
$(R^{(q)}+\a)^{-1}(F(\a)-q\matI)$ reduces to
$(\psi(\a)-q)/(\a-\Phi^{(q)})$ in the \levy case. This leads to the
known identity with $\a=0$ for a \levy process,
see~\citet[Thm.~8.1]{kyprianou}.

We can consider process $X$ killed on exiting positive half-line
and the corresponding potential measure (a matrix of measures)
\begin{align}\label{eq:potential}
 &U^{(q)}(x,A)=\int_0^\infty e^{-qt}\p_x[X_t\in A; t>\tau_0^-, J_t]\D t,
\end{align}
where $A$ is a Borel set.
It turns out that the measure $U^{(q)}(x,A)$ has a matrix density $u^{(q)}(x,z)$ on $(0,+\infty)$ with respect to Lebesgue measure
and by \citet{ivanovs_potential} it is given by
\begin{equation}\label{thm:13}
u^{(q)}(x,z)=W^{(q)}(x)e^{R^{(q)}z}-W^{(q)}(x-z).\end{equation}

\section{Compensation formula and Gerber-Shiu function}\label{Ger:sec}
Our first goal is to give various key representations of Gerber-Shiu function.
We start from describing so-called compensation formula in the context of
regime switching ruin probabilities.

We define (non-homogeneous) marked Poisson measure
\[N_C([0,t)\times A)=\sum_{k=1}^{N_t}\delta_{C_k^{(J_{T^N_k})}}(A) + \sum_{T_k\leq t}\delta_{C_k^{(J_{T_k-}, J_{T_k})}}(A),\]
where $T^N_k$ are jumps epochs of the Poisson process $N_t$, $\delta_x$ is the Dirac measure at $x$ and $A$ is a Borel set.
From \citet[Def. II.1.20 and Prop. II.1.21]{JacodShiryaev} and \citet{Cinlar1} it follows that
\begin{equation}\label{nu}
\nu^{(J_{s-}, J_s)}(\D y)=\lambda_{J_s}F^{(J_s)}_C(\D y) + q_{J_{s-}, J_s} F_C^{(J_{s-}, J_s)}(\D y)
\end{equation}
is a compensator of $N_C$, that is, for any measurable and bounded $g$ we have
\[\e_{i} \int_0^t g(s,y, J_{s-}, J_s) N_C(\D s, \D y)=\e_{i} \int_0^t g(s,y, J_{s-}, J_s)
\nu^{(J_{s-}, J_s)}(\D y)\D s.\]
We denote by $T_n^C$ the jump epoch of the risk process \eqref{riskMM3}. This means
that $T_n^C$  is either $T^N_k$ (the jump epoch of the Poisson process)
or $T_n$ (the moment when Markov chain $J$ changes a state).
This gives the following compensation formula.
\begin{theorem}\label{compformula}
For any measurable and bounded function $f: [0,+\infty)^2\times\mathbb{R}\times E^2\rightarrow \mathbb{R}$ we have
\begin{eqnarray*}
\lefteqn{\e_{x,i}\left[\sum_{T_k^C\leq t} f(T_k^C, X_{T^C_k-}, X_{T^C_k}, J_{T^C_k-}, J_{T^C_k})\right]}\\
&&=\sum_{k,j\in E}\int_0^\infty\e_{x,i}\int_0^t f(s, X_{s}, X_{s}-y, k, j)
\D s\; \nu^{(k,j)}(\D y).\end{eqnarray*}
\end{theorem}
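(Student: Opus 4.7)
The strategy is to rewrite the sum on the left as a single integral against the random measure $N_C$ and then invoke the general compensation theorem for integer-valued random measures. Since at every jump epoch $T^C_k$ the reserve $X$ drops by exactly the corresponding claim size, $X_{T^C_k}=X_{T^C_k-}-y$ whenever the mark of $N_C$ at $T^C_k$ equals $y$, and so I would first recast the left-hand side as
\begin{equation*}
\int_0^t \int_0^\infty f\bigl(s, X_{s-}, X_{s-}-y, J_{s-}, J_s\bigr)\, N_C(\D s, \D y).
\end{equation*}

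Next, I would apply the compensation theorem (e.g., \citet[Thm.~II.1.8]{JacodShiryaev}): since the compensator of $N_C$ has been identified in \eqref{nu} as $\nu^{(J_{s-}, J_s)}(\D y)\,\D s$, taking expectations gives
\begin{equation*}
\e_{x,i}\!\int_0^t\!\int_0^\infty\! f\bigl(s,X_{s-}, X_{s-}-y, J_{s-}, J_s\bigr)\, N_C(\D s,\D y) = \e_{x,i}\!\int_0^t\!\int_0^\infty\! f\bigl(s,X_{s-}, X_{s-}-y, J_{s-}, J_s\bigr)\, \nu^{(J_{s-}, J_s)}(\D y)\,\D s.
\end{equation*}
To reach the form stated in the theorem I would then expand the dependence on the pair $(J_{s-}, J_s)$ via indicators, writing
\begin{equation*}
f\bigl(s,X_{s-},X_{s-}-y,J_{s-},J_s\bigr)\,\nu^{(J_{s-},J_s)}(\D y) = \sum_{k,j\in E} f\bigl(s,X_{s-},X_{s-}-y,k,j\bigr)\,\nu^{(k,j)}(\D y)\,\ind{J_{s-}=k,\,J_s=j},
\end{equation*}
interchange the finite sum with the integrals via Fubini--Tonelli (justified by the boundedness of $f$), and replace $X_{s-}$ by $X_s$ (which coincide off a Lebesgue-null set of times) to match the form of the right-hand side.

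The main technical subtlety is that the integrand depends on the post-jump state $J_s$ and hence is not literally predictable. This is precisely why the decomposition above is essential: on $\{J_{s-}=J_s=k\}$ only the Poisson term $\lambda_k F^{(k)}_C(\D y)$ of $\nu^{(k,k)}$ is operative and the integrand becomes predictable, while on $\{J_{s-}=k,\,J_s=j\}$ with $k\neq j$ the mark at the jump of $J$ is drawn independently from $F^{(k,j)}_C$, so the classical compensator $q_{kj}F^{(k,j)}_C(\D y)\,\D s$ for Markov-chain jump claims applies (cf.\ \citet{Cinlar1}). Combining the two contributions reproduces the compensator \eqref{nu}, so the proof amounts to assembling these standard pieces in the right order.
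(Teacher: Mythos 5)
Your proposal is correct and follows essentially the same route as the paper: rewrite the post-jump value $X_{T^C_k}$ as $X_{T^C_k-}$ minus the claim mark, recast the sum as an integral against the marked point measure $N_C$, and invoke the compensation formula with the compensator \eqref{nu} before decomposing over the pair of environmental states. Your additional discussion of the predictability issue (splitting the diagonal Poisson contribution from the off-diagonal Markov-transition contribution) and of replacing $X_{s-}$ by $X_s$ under the Lebesgue integral is a welcome refinement that the paper's terser argument leaves implicit.
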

\begin{proof}
Note that
\begin{align*}
&\e_{x,i}\left[\sum_{T^C_k\leq t} f(T^C_k, X_{T^C_k-}, X_{T^C_k}, J_{T^C_k-}, J_{T^C_k})\right]\\
&=\e_{x,i}\left[\sum_{T^C_k\leq t} f(T^C_k, X_{T^C_k-}, X_{T^C_k-}
-C_k^{(J_{T_k^C})}\ind{J_{T_k^C-}=J_{T_k^C}}-C_k^{(J_{T_k^C-}, J_{T_k^C})}\ind{J_{T_k^C-}\neq J_{T_k^C}}, J_{T^C_k-}, J_{T^C_k})\right]\\
&=\e_{x,i} \int_0^t f(s, X_{s-}, X_{s-}-y, J_{s-}, J_s)N_C(\D s, \D y)\\&
=\sum_{k,j\in E}\int_0^\infty\e_{x,i}\int_0^t f(s, X_{s}, X_{s}-y, k, j)
\D s\; \nu^{(k,j)}(\D y)
\end{align*}
which completes the proof.
\end{proof}
\begin{cor}\label{compensationGB1}
We have
\[\phi_{w,ij}^{(q)}(x)=\sum_{k\in E} \int_0^\infty \int_0^y w(z,y-z)u^{(q)}_{ik}(x,z) \D z\; \nu^{(k,j)}(\D y),\]
where $i,j\in E$ and $u^{(q)}(x, z)$ is a $q$-potential density of the process $X$ starting at $x$ and killed on exiting from $[0,+\infty)$.
\end{cor}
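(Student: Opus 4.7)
I would first observe that, because the drift $p_{J_u}$ is strictly positive and $X$ can only decrease at the jump epochs $T_k^C$, the ruin time $\tau_0^-$ must coincide with some $T_k^C$. Consequently
$$\phi_{w,ij}^{(q)}(x)=\e_{x,i}\!\left[\sum_{T_k^C<\infty}e^{-qT_k^C}w\bigl(X_{T_k^C-},-X_{T_k^C}\bigr)\ind{\tau_0^-\ge T_k^C}\ind{X_{T_k^C}<0}\ind{J_{T_k^C}=j}\right],$$
because on $\{\tau_0^-\ge T_k^C,\,X_{T_k^C}<0\}$ the first downcrossing is exactly at $T_k^C$, so the sum reduces to the single contribution from $T_k^C=\tau_0^-$ on $\{\tau_0^-<\infty\}$ and otherwise vanishes.

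Next I would apply Theorem~\ref{compformula} to the predictable integrand
$$f(s,x_-,x,k,l)=e^{-qs}w(x_-,-x)\ind{\tau_0^-\ge s}\ind{x_-\ge 0}\ind{x<0}\ind{l=j},$$
using that $\ind{\tau_0^-\ge s}$ is left-continuous and hence predictable. Because $\{s:J_{s-}\ne J_s\}$ has zero Lebesgue measure, we may replace $X_{s-}$ by $X_s$ and $J_{s-}$ by $J_s$ in the resulting $\D s$-integral; the factor $\ind{l=j}$ collapses the double sum over $(k,l)\in E^2$ to a single sum over $k\in E$, giving
$$\phi_{w,ij}^{(q)}(x)=\sum_{k\in E}\int_0^\infty\e_{x,i}\!\left[\int_0^\infty e^{-qs}w(X_s,y-X_s)\ind{X_s\in[0,y)}\ind{s<\tau_0^-}\ind{J_s=k}\,\D s\right]\nu^{(k,j)}(\D y).$$

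To conclude I would invoke Fubini to swap the order of integration and recognise the inner expectation via the definition of the killed potential density \eqref{eq:potential}, namely
$$\int_0^\infty e^{-qs}\,\p_{x,i}\!\left[X_s\in\D z,\,s<\tau_0^-,\,J_s=k\right]\D s=u^{(q)}_{ik}(x,z)\,\D z.$$
The change of variable $z=X_s$ and the restriction $z\in[0,y)$ furnished by $\ind{X_s\in[0,y)}$ then produce exactly the claimed identity. The main subtlety is bookkeeping around the dual role of the state index $k$: it is the pre-jump state $J_{s-}$ that enters the compensator $\nu^{(k,j)}$, whereas $u^{(q)}_{ik}$ is defined via the time-$s$ state $J_s$, and the two identifications reconcile only after one discards the Lebesgue-null set of $J$-jump times.
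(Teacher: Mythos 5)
Your proof is correct and follows essentially the same route as the paper's: reduce the Gerber--Shiu function to a sum over the jump epochs $T_k^C$ (using that ruin can only occur at a jump), apply the compensation formula of Theorem~\ref{compformula}, and identify the resulting discounted occupation integral with the killed potential density $u^{(q)}_{ik}(x,z)$. You are in fact slightly more careful than the paper about the first-downcrossing bookkeeping, the predictability of the indicator $\ind{\tau_0^-\ge s}$, and the reconciliation of $J_{s-}$ with $J_s$ on a Lebesgue-null set.
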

\begin{proof}
Observe that ruin can happen only at the moments of claim arrivals. Hence
from Theorem \ref{compformula},
\begin{eqnarray*}
\lefteqn{\phi_{w,ij}^{(q)}(x)=\e_{x,i}\left[\sum_{k=1}^\infty e^{-q T^C_k}w(X_{T_k^C-},|X_{T_k^C}|)\ind{X_{T_k^C-}\ge 0, X_{T_k^C}<0, J_{T_k^C}=j}\right]}\\
&&\sum_{k\in E} \int_0^\infty \left(
\e_{x,i} \left[\int_0^\infty \int_0^y e^{-q s}w(z,y-z)\ind{X_{s-}\in \D z, \tau_0^->s}\ind{J_{s-}=k, J_s=j}\D s\right]
\right)
\nu^{(k,j)}(\D y)\\
&&=\sum_{k\in E} \int_0^\infty \left(
\left[\int_0^\infty \int_0^y e^{-q s}w(z,y-z)\p_{x,i}(X_{s-}\in \D z, \tau_0^->s, J_{s-}=k)
\D s\right]
\right)
\nu^{(k,j)}(\D y).
\end{eqnarray*}
This completes the proof.
\end{proof}

Writing $\mathbf{\phi}_{w}^{(q)}(x)=(\phi_{w,1}^{(q)}(x), \ldots, \phi_{w,N}^{(q)}(x))^T$ and using Fubbini theorem
we can conclude that Corollary \ref{compensationGB1} can be rewritten in the matrix notation as follows:
\begin{align}\label{compstarting}
\mathbf{\phi}_{w}^{(q)}(x)&= \int_0^\infty \int_0^y w(z,y-z)u^{(q)}(x,z)\D z\; \nu (\D y)\nonumber\\
&=\int_0^\infty \int_0^\infty w(z,y)u^{(q)}(x,z)\; \nu (z+\D y)\D z
\end{align}
where the measure $\nu$ is defined formally in \eqref{nu}.

\begin{remark}
\rm For above considerations the downward jumps are not crucial and
above corollary could be easily adopted for the general direction of jumps
of the process $X$.
\end{remark}

Similar result was derived by \citet{SalahMorales} and \citet[Thm. V.5.5]{AndreasGerberShiu} in the context of spectrally negative
L\'evy processes.
From Corollary \ref{compensationGB1} and \eqref{thm:13}
we have the first main result of this section.
\begin{theorem}[Compensation representation]\label{compensationrepresentation}
We have
\[\mathbf{\phi}_{w}^{(q)}(x)=\int_0^\infty \int_0^\infty w(z,y)u^{(q)}(x,z)\; \nu (z+\D y)\D z,\]
where $u^{(q)}(x,z)$ is given in \eqref{thm:13}.
\end{theorem}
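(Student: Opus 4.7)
The plan is to read off the theorem as an immediate packaging of Corollary \ref{compensationGB1} together with the explicit formula \eqref{thm:13} for the potential density.

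First I would observe that summing the component identity of Corollary \ref{compensationGB1} over $j\in E$ and stacking the rows into a column vector produces exactly the displayed matrix identity \eqref{compstarting}. The only manipulation needed here is the change of variable $y \mapsto z+y$ inside the inner integral, together with Tonelli's theorem to exchange the order of integration in $z$ and $y$; this is legitimate because the penalty $w$ is non-negative, the potential density $u^{(q)}(x,z)$ is a density of a non-negative matrix-valued measure, and $\nu(\cdot)$ defined via \eqref{nu} is positive. So no integrability care beyond positivity is needed.

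Next I would simply substitute the Ivanovs-Palmowski formula
\[ u^{(q)}(x,z) = W^{(q)}(x)\,e^{R^{(q)}z} - W^{(q)}(x-z) \]
from \eqref{thm:13} into the matrix representation \eqref{compstarting}. Since this substitution is purely algebraic inside the integral, there is nothing further to verify, and the claimed formula results.

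The only mild subtlety will be bookkeeping: one must remember that $W^{(q)}(x-z)$ vanishes by convention for $z>x$ (so that the $z$-integral is effectively truncated at $x$ for that term, while the $e^{R^{(q)}z}$ term contributes for all $z\in(0,\infty)$), and that $\nu(z+\D y)$ denotes the measure $\nu$ shifted by $z$, taken with respect to $y$. No deeper obstacle arises, because the genuine probabilistic content, namely the compensation formula of Theorem \ref{compformula} and the identification of $u^{(q)}$, has already been invested in the preceding results.
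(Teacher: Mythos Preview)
Your proposal is correct and matches the paper's approach exactly: the paper likewise derives \eqref{compstarting} from Corollary~\ref{compensationGB1} via Fubini/Tonelli and a change of variable, and then simply observes that $u^{(q)}(x,z)$ is the density identified in \eqref{thm:13}. Your remarks on non-negativity justifying Tonelli and on the vanishing of $W^{(q)}(x-z)$ for $z>x$ are reasonable elaborations of steps the paper leaves implicit; note also that no actual substitution of \eqref{thm:13} into the integral is required, since the theorem merely records the reference to that formula.
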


We recall that the matrix $\mathbf{\phi}_1^{(q)}(x)=(\mathbf{\phi}_{1,ij} ^{(q)}(x))_{\{i,j\in E\}}$
describes the Laplace transform of the ruin time on the event when ruin happens and it is
defined formally in \eqref{ruinprobability}.
From \eqref{2sideddown} by taking $\alpha=0$ and $a\rightarrow +\infty$
the following its representation holds true (compare with \citet[Thm. IV.4.3]{AndreasGerberShiu}).
\begin{theorem}[Discounted ruin probability]\label{scaleruinproabilitydiscounted}
We have,
\begin{equation}\label{LTruintime}
\mathbf{\phi}_1^{(q)}(x)
=Z^{(q)}(x)- W^{(q)}(x)\mathbf{C}_{W(\infty)^{-1}Z(\infty)},
\end{equation}
for
\begin{equation*}
 \mathbf{C}_{W(\infty)^{-1}Z(\infty)}=
 \lim_{a \to \infty} W^{(q)}(a)^{-1}Z^{(q)}(a)<\infty.
\end{equation*}
\end{theorem}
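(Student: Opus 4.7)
The plan is to start from the two-sided exit identity \eqref{2sideddown}, specialize to $\alpha=0$, and then let $a\to\infty$, using the net profit condition \eqref{netprofit} to control the upper passage time. Setting $\alpha=0$ in \eqref{2sideddown} yields, for every $a>x$,
\begin{equation*}
\e_x\!\left[e^{-q\tau_0^-};\tau_0^-<\tau_a^+, J_{\tau_0^-}\right]=Z^{(q)}(x)-W^{(q)}(x)W^{(q)}(a)^{-1}Z^{(q)}(a),
\end{equation*}
so the theorem reduces to passing to the limit $a\to\infty$ on both sides.

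For the left-hand side, I would argue as follows. Under the net profit condition \eqref{netprofit} the process $X$ drifts to $+\infty$ almost surely, so $\tau_a^+\uparrow\infty$ as $a\to\infty$, $\p_{x,i}$-a.s. Consequently the events $\{\tau_0^-<\tau_a^+\}$ increase to $\{\tau_0^-<\infty\}$ as $a\to\infty$, and the nonnegative random variables $e^{-q\tau_0^-}\ind{\tau_0^-<\tau_a^+,\,J_{\tau_0^-}=j}$ increase to $e^{-q\tau_0^-}\ind{\tau_0^-<\infty,\,J_{\tau_0^-}=j}$. Monotone convergence then delivers
\begin{equation*}
\lim_{a\to\infty}\e_x\!\left[e^{-q\tau_0^-};\tau_0^-<\tau_a^+, J_{\tau_0^-}\right]=\e_x\!\left[e^{-q\tau_0^-};\tau_0^-<\infty, J_{\tau_0^-}\right]=\boldsymbol{\phi}_1^{(q)}(x),
\end{equation*}
with each entry bounded by $1$, hence finite.

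For the right-hand side, $Z^{(q)}(x)$ is independent of $a$ and $W^{(q)}(x)$ is invertible for every fixed $x>0$ (as recalled after \eqref{Wq}). Rearranging the two-sided identity gives
\begin{equation*}
W^{(q)}(a)^{-1}Z^{(q)}(a)=W^{(q)}(x)^{-1}\!\left(Z^{(q)}(x)-\e_x\!\left[e^{-q\tau_0^-};\tau_0^-<\tau_a^+, J_{\tau_0^-}\right]\right),
\end{equation*}
so the convergence of the probabilistic term on the left side of the identity automatically forces existence and finiteness of
\begin{equation*}
\mathbf{C}_{W(\infty)^{-1}Z(\infty)}=\lim_{a\to\infty}W^{(q)}(a)^{-1}Z^{(q)}(a).
\end{equation*}
Taking the limit in the two-sided exit formula then yields \eqref{LTruintime}.

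The only genuinely delicate point is the $a\to\infty$ justification: one must verify that $\tau_a^+\to\infty$ a.s.\ (which is immediate from $X_t\to\infty$ a.s.\ under \eqref{netprofit}) and, to keep finiteness of the matrix limit $\mathbf{C}_{W(\infty)^{-1}Z(\infty)}$ transparent, use that $W^{(q)}(x)$ is nonsingular at some fixed $x>0$ to transfer convergence from the probabilistic side to the algebraic side. The remaining manipulation is routine.
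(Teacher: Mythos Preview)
Your proposal is correct and follows essentially the same approach as the paper: the paper simply states that the result follows from \eqref{2sideddown} by taking $\alpha=0$ and $a\to\infty$, and then remarks that $\mathbf{C}_{W(\infty)^{-1}Z(\infty)}$ is well-defined because $\boldsymbol{\phi}_1^{(q)}(x)$ is. You have merely supplied the details the paper omits (monotone convergence on the probabilistic side and the use of invertibility of $W^{(q)}(x)$ to deduce existence of the algebraic limit), so there is no substantive difference in strategy.
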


Observe that $\mathbf{C}_{W(\infty)^{-1}Z(\infty)}$ is well-defined since
$\mathbf{\phi}_1^{(q)}(x)$ as a limit is well-defined.
Note that identity \eqref{LTruintime} gives the Laplace transform of the finite-time ruin time as well, that is,
\[\mathbf{\phi}_1^{(q)}(x)=\int_0^\infty e^{-q t}\D \p_{x} (\tau^-_0\leq t).
\]
In other words inverting \eqref{LTruintime} gives the finite-time ruin probability
\begin{equation}\label{finitetimeruinprob}
\mathbf{\phi}(x,t)=\p_{x} (\tau^-_0\le t).
\end{equation}

Multiplying by $\1$ from the right and taking $q\downarrow 0$ in \eqref{LTruintime} gives also the representation of the ruin probability
$\phi_i(x)$ defined in \eqref{ruinprobdef2}.
Indeed, recalling that $\phi(x) =(\phi_{ij}(x))_{\{i,j\in E\}}$ is defined in \eqref{ruinprobdef}, note that
\begin{align*}
&\phi_i(x)=(\phi(x)\1)_i=
\p_{x,i}(\tau_0^-<\infty)\\&
=(\matI \1-\int_0^xW(y)\D y F(0)\1 - W(x)\mathbf{C}_{W(\infty)^{-1}Z(\infty)}\1)_i=
1-(W(x)\mathbf{C}_{W(\infty)^{-1}Z(\infty)})\1)_i.\end{align*}
Above we have used the definition of the second scale matrix
$Z^{(q)}(x)$ given in \eqref{secsc} and fact that $F(0)\1=Q\1=0$.
We introduce now the survival probability
\[\overline{\phi}_i(x)=1-\phi_i(x).\]
Using fact that matrix $\mathbf{C}_{W(\infty)^{-1}Z(\infty)}$  is well-defined
we can conclude the following representation of survival probability.
\begin{theorem}[Survival probability]\label{scaleruinproability}
Under net profit condition \eqref{netprofit} we have
\begin{equation}\label{ruintimeprob}
\overline{\phi}_i(x)=(W(x)\mathbf{C}_{W(\infty)^{-1}Z(\infty)}\1)_i, \qquad i\in E.
\end{equation}
\end{theorem}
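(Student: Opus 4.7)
The plan is to read off the survival probability as a direct corollary of Theorem~\ref{scaleruinproabilitydiscounted} specialized to $q=0$ and right-multiplied by $\1$; the statement is essentially a bookkeeping reformulation of the expression already derived for $\mathbf{\phi}_1^{(q)}(x)$, and indeed the paragraph immediately preceding the theorem already performs the core calculation.

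I would carry out the argument in three short steps. First, let $q\downarrow 0$ in
\[\mathbf{\phi}_1^{(q)}(x)=Z^{(q)}(x)-W^{(q)}(x)\mathbf{C}_{W(\infty)^{-1}Z(\infty)}\]
from Theorem~\ref{scaleruinproabilitydiscounted}; the left-hand side converges monotonically to the ruin-probability matrix $\phi(x)=(\phi_{ij}(x))_{i,j\in E}$ of \eqref{ruinprobdef}, while the right-hand side converges entrywise to $Z(x)-W(x)\mathbf{C}_{W(\infty)^{-1}Z(\infty)}$ with the $q=0$ scale matrices. Second, right-multiply by $\1$. Invoking the definition \eqref{secsc} of the second scale matrix together with the elementary identity $Q\1=\zero$ for the intensity matrix $Q=F(0)$, one obtains
\[Z(x)\1=\1-\int_0^xW(y)\,\D y\,Q\1=\1,\]
so that $\phi_i(x)=1-(W(x)\mathbf{C}_{W(\infty)^{-1}Z(\infty)}\1)_i$. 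Third, substitute into $\overline{\phi}_i(x)=1-\phi_i(x)$ to recover \eqref{ruintimeprob}.

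The only delicate point is the role of the net profit condition \eqref{netprofit}. It guarantees that $X_t\to\infty$ almost surely, which on one hand ensures $\phi_i(x)<1$ and makes $\overline{\phi}_i(x)>0$ a genuine non-trivial survival probability, and on the other hand, combined with the invertibility of $W(x)$ for $x>0$ and the finite limit of $\phi(x)$, forces $\mathbf{C}_{W(\infty)^{-1}Z(\infty)}=\lim_{a\to\infty}W(a)^{-1}Z(a)$ to be a finite matrix at $q=0$ (the well-definedness already noted right after Theorem~\ref{scaleruinproabilitydiscounted}). Once this finiteness is granted the remainder is pure linear algebra and no further technical obstacle arises.
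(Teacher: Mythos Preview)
Your proposal is correct and follows essentially the same approach as the paper: take $q\downarrow 0$ in Theorem~\ref{scaleruinproabilitydiscounted}, right-multiply by $\1$, use $Z(x)\1=\1$ via $F(0)\1=Q\1=\0$, and read off $\overline{\phi}_i(x)=1-\phi_i(x)$. As you yourself note, the paragraph preceding the theorem already carries out this computation, and the paper's ``proof'' consists precisely of invoking that paragraph together with the well-definedness of $\mathbf{C}_{W(\infty)^{-1}Z(\infty)}$.
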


Let now $N=1$. Then $Q=0$.
Recall that
\[\int_0^\infty e^{-\a x}W(x)\D x=F(\a)^{-1}.\]
In our case by \eqref{Fforriskprocess},
\begin{align*}F(\a)^{-1}=&
\frac{1}{p_1\a } \frac{1}{1-\lambda_1\e C^{(1)}_1\int_0^\infty e^{-\a y}\overline{F}_C^{(1)}(y)/\e C^{(1)}_1\D y}
\\
&=
\frac{1}{p_1\a } \sum_{k=1}^\infty \rho_1^k \left(\int_0^\infty e^{-\a y} F_C^{I,(1)}(\D y)\right)^k
\end{align*}
where \[\rho_1=\frac{\lambda_1\e C^{(1)}_1}{p_1},\]
\[F^{I,(1)}_C(x)=\frac{1}{\e C^{(1)}_1}\int_0^x\overline{F}_C^{(1)}(y)\D y,\]
for
\[\overline{F}_C^{(1)}(x)=1-F_C^{(1)}(x).\]
Thus
\[W(x)=
\sum_{k=1}^\infty \rho_1^k (F^{I,(1)}_C)^{*k}(x)
,\]
where $(F^{I,(1)}_C)^{*k}$ denotes the $k$th convolution of distribution
$F^{I,(1)}_C$.
To sum up,
\[\overline{\phi}_1(x)=A_1\sum_{k=1}^\infty \rho_1^k (F^{I,(1)}_C)^{*k}(x)\]
for some constant $A_1$.
From the fact that $\lim_{x\to+\infty}\overline{\phi}_1(x)=1$
we can get identify constant $A_1$ and derive the seminal Pollaczek-Khintchine formula for the survival probability.
\begin{theorem}[Pollaczek-Khintchine formula] If $N=1$ then
\[\overline{\phi}_1(x)=(1-\rho_1)\sum_{k=1}^\infty \rho_1^k (F^{I,(1)}_C)^{*k}(x).\]
\end{theorem}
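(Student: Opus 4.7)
The plan is to specialise Theorem \ref{scaleruinproability} to the scalar case $N=1$ and then compute the scale function $W$ explicitly from its defining Laplace transform \eqref{Wq}.

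First, for $N=1$ the matrix identity \eqref{ruintimeprob} collapses to the scalar equation $\overline{\phi}_1(x)=A_1\,W(x)$, where $A_1:=\mathbf{C}_{W(\infty)^{-1}Z(\infty)}$ is a positive constant (well-defined by Theorem \ref{scaleruinproabilitydiscounted}). The problem thus splits into (a) computing $W$ in closed form, and (b) identifying $A_1$.

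Second, for (a) I would start from \eqref{Wq}, which for $N=1$ reads $\int_0^\infty e^{-\a x}W(x)\,\D x=F(\a)^{-1}$ with $F(\a)=p_1\a-\lambda_1(1-\e e^{-\a C^{(1)}})$. Applying the elementary tail-integration identity $1-\e e^{-\a C^{(1)}}=\a\int_0^\infty e^{-\a y}\overline{F}_C^{(1)}(y)\,\D y$, one factors out $(p_1\a)^{-1}$ and recognises the remainder as $1$ minus $\rho_1$ times the Laplace--Stieltjes transform of the integrated-tail distribution $F^{I,(1)}_C$. Under the net profit condition \eqref{netprofit}, which in the scalar case reduces to $\rho_1<1$, I expand geometrically in $\rho_1$ and invert term-by-term via the convolution rule, recovering the series representation of $W$ displayed in the excerpt.

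Third, for (b) I would send $x\to\infty$. The net profit condition ensures that $X_t\to+\infty$ almost surely, so $\overline{\phi}_1(x)\to 1$. Each $(F^{I,(1)}_C)^{*k}$ is a proper distribution function tending to $1$, and $\sum_k\rho_1^k<\infty$ since $\rho_1<1$, so dominated convergence lets the limit pass inside the sum. Comparing the resulting limit of $A_1 W(x)$ with $1$ forces $A_1=1-\rho_1$.

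The only genuine technical point is the term-by-term Laplace inversion in step two: since the geometric expansion converges absolutely for $\a$ sufficiently large and all summands are nonnegative monotone functions, Tonelli's theorem justifies interchanging summation with the Laplace integral, after which uniqueness of the Laplace transform yields the pointwise series for $W$. Once this is in place, the identification of $A_1$ via the boundary value at infinity is essentially immediate.
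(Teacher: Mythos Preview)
Your proposal is correct and follows essentially the same route as the paper: specialise Theorem~\ref{scaleruinproability} to $N=1$, compute $W$ by expanding $F(\a)^{-1}$ as a geometric series in $\rho_1$ times the Laplace--Stieltjes transform of $F^{I,(1)}_C$ and inverting term by term, and then pin down the constant $A_1$ from the boundary condition $\overline{\phi}_1(\infty)=1$. You are in fact slightly more careful than the paper in spelling out the justification for the termwise inversion (Tonelli) and the passage to the limit (dominated convergence).
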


We denote by
\[\xi_k=X_{T^C_{k-1}}-X_{T^C_{k}}\]
the negative of the increments of the risk process between consecutive jumps.
Note that, conditionally on $J_{T_l^C}$, random variables
$\xi_k$ are independent on each other. Moreover, conditionally on
$\{J_{T_k^C}=j, J_{T_{k-1}^C}=i\}$, $-\xi_k$ has the same law
as
\begin{equation}\label{jumprepresentationdistribution}
p_i{\rm Exp}(\lambda_i-q_{ii})- \frac{\lambda_i}{\lambda_i-q_{ii}}C^{(i)}_k-
\frac{q_{ij}}{\lambda_i-q_{ii}}C^{(ij)}_k\end{equation}
because waiting time for next jump has exponential law ${\rm Exp}(\lambda_i-q_{ii})$ with parameter
$q_{ij}+\lambda_i$. Then we have to take into account which jump is first: the one coming from Markov modulated Poisson process
$N$ or the one that Markov chain change of the state brings.
In the first scenario Markov chain $J_t$ remains in the state $i$.
Note that the law of $\xi_k$ depends on the states $J^C_{k-1}$ and $J_k^C$ of the discrete-time Markov chain
\[J_0^C=J_0,\qquad J^C_k=J_{T_k^C}, \qquad k=1,2,\ldots\]
with the transition matrix
\begin{equation}\label{transition}
\mathscr{P}=(p_{ij})\quad \text{with}\quad p_{ii}=\frac{\lambda_i}{\lambda_i-q_{ii}}\quad\text{and}
\quad p_{ij}=\frac{q_{ij}}{\lambda_i-q_{ii}}\quad\text{for $i\neq j$.}
\end{equation}
We define the following discrete-time Markov modulated random random walk:
\begin{equation}\label{MMRW}
S_0=0, \qquad S_n=\sum_{k=1}^n \xi_k.
\end{equation}

We recall the key observation that ruin of the risk process
\eqref{riskMM2} only at the claim arrivals. By shifting a generic trajectory
of $X_t$ by $x$ units downward and then reflecting it across the horizontal
axis, we can observe that the risk process $X$ gets below zero level if and only if
$S_k$ will ever cross level $x$. Moreover, the net profit condition \eqref{netprofit}
is equivalent to the requirements that $S_n\rightarrow+\infty$ a.s.
This gives us the last representation.
\begin{theorem}[Maximum random walk]\label{MMRW}
We have
\begin{equation}\label{maxMMRW}
\phi_i(x)=\p_i\left(\max_{k\ge 0} S_k>x\right)<1.\end{equation}
\end{theorem}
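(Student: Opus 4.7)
The plan is to exploit three structural facts about the process defined in \eqref{riskMM3}: (i) on each interval $[T^C_{k-1},T^C_k)$ the reserve $X_t$ is continuous and nondecreasing (indeed affine with strictly positive slope $p_{J_u}>0$), (ii) at every epoch $T^C_k$ the process decreases by a nonnegative amount ($C_k^{(J_{T^C_k})}$ or $C_k^{(J_{T^C_k-},J_{T^C_k})}$), and (iii) the net profit condition \eqref{netprofit} forces $X_t\to+\infty$ $\p_i$-a.s., which is equivalent to $S_n\to-\infty$ $\p_i$-a.s.\ (the paper's statement that $S_n\to+\infty$ appears to be a sign typo, since by definition $\xi_k=X_{T^C_{k-1}}-X_{T^C_k}$).

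First I would show that $\{\tau_0^-<\infty\}=\{X_{T^C_k}<0\text{ for some }k\ge 1\}$. By (i) and (ii), on any interval $[T^C_{k-1},T^C_k)$ the map $t\mapsto X_t$ attains its minimum at the left endpoint; hence if $X_{t_0}<0$ for some $t_0>0$ starting from $X_0=x\ge 0$, then the first such $t_0$ must coincide with some $T^C_k$. Next, a telescoping identity gives
\begin{equation*}
X_{T^C_n}=x+\sum_{k=1}^n\bigl(X_{T^C_k}-X_{T^C_{k-1}}\bigr)=x-\sum_{k=1}^n\xi_k=x-S_n,
\end{equation*}
so $X_{T^C_n}<0$ iff $S_n>x$. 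Combining, $\{\tau_0^-<\infty\}=\{\max_{n\ge 1}S_n>x\}$, and since $S_0=0\le x$ the index $n=0$ may be included. This proves the equality in \eqref{maxMMRW}.

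For the strict inequality $\phi_i(x)<1$, the route I would take is: from $S_n\to-\infty$ a.s.\ deduce that $M:=\max_{k\ge 0}S_k$ is $\p_i$-a.s.\ finite. To upgrade finiteness to $\p_i(M>x)<1$ for every $x\ge 0$, I would invoke the ladder-height decomposition for the Markov-modulated random walk $(S_n,J^C_n)$ with transition kernel \eqref{transition}: the first strict ascending ladder epoch $\tau^+=\inf\{n\ge 1:S_n>0\}$ is defective under negative mean drift, so $\p_i(M=0)=\p_i(\tau^+=\infty)>0$, and hence $\p_i(M\le x)\ge\p_i(M=0)>0$ for every $x\ge 0$. (Equivalently, one can quote Theorem \ref{scaleruinproability}: $\overline{\phi}_i(x)=(W(x)\mathbf{C}_{W(\infty)^{-1}Z(\infty)}\mathbf{1})_i$ is strictly positive by invertibility of $W(x)$ for $x>0$ and positivity of the constant matrix.)

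The main obstacle is step three: connecting the ``drift to $-\infty$'' with the genuinely stronger statement that the running maximum stays below any prescribed level with positive probability. The identification in steps one and two is essentially deterministic (a pathwise picture obtained by ``shifting and reflecting'' as the paper suggests), whereas justifying $\p_i(M=0)>0$ for Markov-modulated random walks requires either explicit ladder machinery or appeal to an external survival result such as the one already proved in this excerpt.
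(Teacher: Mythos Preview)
Your proposal is correct and follows essentially the same route as the paper: the paper's justification (the paragraph immediately preceding the theorem) is precisely the ``ruin only at jump epochs $+$ shift-and-reflect'' picture you make rigorous via the telescoping identity $X_{T^C_n}=x-S_n$. Your observation about the sign typo ($S_n\to-\infty$, not $+\infty$) is correct, and your treatment of the strict inequality via defectiveness of the first ascending ladder epoch is more careful than the paper, which simply asserts it as a consequence of the net profit condition without further argument.
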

\begin{remark}\label{Markovmodulatedrandomwalkremarkassump}\rm
Note that
by enriching the state space to the pairs $(i,j)\in E^2$ and by taking the Markov chain
$\tilde{J}_t=(J_{t-},J_t)$, we can assume without loss of
generality that the distribution of generic increment $\xi$, depends only on the state
of the Markov chain just prior jump. In this case $X$ becomes so-called non-anticipative MAP.
In other words, without loss of generality we can assume
the distribution of $\xi$ (being the negative of r.v. given in \eqref{jumprepresentationdistribution})
can be indexed by
$F_\xi^{(i)}(x)$.
To simplify this analysis we will assume from now on that this condition holds whenever we work with random walk $S_n$.
In this case, denoting
\[a_i=\int_{\mathbb{R}} x F_\xi^{(i)}(\D x)\]
which we assume to be finite,
the net profit condition \eqref{netprofit} is equivalent to
\begin{equation}\label{driftMMRW}
\overline{a}=-\sum_{i=1}^N \pi_ia_i>0.
\end{equation}
\end{remark}

\begin{remark}\rm
The representation \eqref{maxMMRW} of the ruin probability via maximum random walk remains
true even if the generic time between arrivals of the claims
has a general distribution possibly depending on the discrete time Markov chain $J^C$.
\end{remark}

\section{Ordinary differential equation}\label{sec:diff}
In this section we derive ordinary differential equation for Gerber-Shiu
functions.
We start from proving some crucial martingale
property.
\begin{theorem}\label{scalemartingales}
For each $i\in E$, the processes:
\[e^{-qt\wedge \tau_0^-\wedge \tau_a^+}W^{(q)}_{iJ_{t\wedge \tau_0^-\wedge \tau_a^+}}(X_{t\wedge \tau_0^-\wedge \tau_a^+}),\qquad e^{-qt\wedge \tau_0^-\wedge \tau_a^+}Z^{(q)}_{iJ_{t\wedge \tau_0^-\wedge \tau_a^+}}(X_{t\wedge \tau_0^-\wedge \tau_a^+})\]
are uniformly integrable  martingales with respect of the natural filtration
$\mathcal{F}_t$ of MAP $(X,J)$.
\end{theorem}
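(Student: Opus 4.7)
My plan is to reduce the martingale property to a single key identity via the strong Markov property, then to deduce that identity from the two-sided exit formulas \eqref{eq:two-sidedq} and \eqref{2sideddown}. Throughout I write $\sigma=\tau_0^-\wedge\tau_a^+$ and $\sigma_t=t\wedge\sigma$, and I adopt the conventions $W^{(q)}(x)=\matO$ and $Z^{(q)}(x)=\matI$ for $x<0$; the second is consistent with the definition \eqref{secsc}.

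By the strong Markov property of the MAP $(X,J)$ applied at $s\le t$, together with time-homogeneity, the martingale identity for both processes reduces to proving, for every $u\ge 0$, $y\in[0,a]$ and $k\in E$, the matrix identities
\[\e_{y,k}\bigl[e^{-q\sigma_u}W^{(q)}(X_{\sigma_u});\,J_{\sigma_u}\bigr]=W^{(q)}(y),\qquad \e_{y,k}\bigl[e^{-q\sigma_u}Z^{(q)}(X_{\sigma_u});\,J_{\sigma_u}\bigr]=Z^{(q)}(y).\]
The case $\{s\ge\sigma\}$ is trivial since then $\sigma_t=\sigma_s$, while on $\{s<\sigma\}$ the identity is applied with $(y,k,u)=(X_s,J_s,t-s)$.

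To prove the first identity, I would split the expectation over the three disjoint events $\{\tau_a^+<u\wedge\tau_0^-\}$, $\{u\le \tau_a^+<\tau_0^-\}$, and $\{\tau_0^-\le u\wedge\tau_a^+\}$. On the first, spectral negativity gives $X_{\sigma_u}=a$, contributing $\e_{y,k}[e^{-q\tau_a^+}W^{(q)}(a);\tau_a^+<u\wedge\tau_0^-,J_{\tau_a^+}]$. On the second, applying strong Markov at $u$ to the restarted process from $(X_u,J_u)\in(0,a)\times E$ and invoking \eqref{eq:two-sidedq} on that process folds the remaining exit into $\e_{y,k}[e^{-qu}W^{(q)}(X_u);u<\sigma,J_u]$. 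Summing these two and noting that the third event contributes zero by the convention on $W^{(q)}$, the left-hand side becomes the full $\e_{y,k}[e^{-q\tau_a^+}W^{(q)}(a);\tau_a^+<\tau_0^-,J_{\tau_a^+}]$, which by \eqref{eq:two-sidedq} equals $W^{(q)}(y)W^{(q)}(a)^{-1}W^{(q)}(a)=W^{(q)}(y)$. The $Z^{(q)}$-identity follows from the same splitting, starting from \eqref{2sideddown} at $\alpha=0$; the only modification is that the $\{\tau_0^-<\tau_a^+\}$ part now contributes $\e_{y,k}[e^{-q\tau_0^-};\tau_0^-<\tau_a^+,J_{\tau_0^-}]$, and combining this with the upper-exit piece $W^{(q)}(y)W^{(q)}(a)^{-1}Z^{(q)}(a)$ telescopes to $Z^{(q)}(y)$ by \eqref{2sideddown}.

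Uniform integrability is then immediate: continuity of $W^{(q)}$ and $Z^{(q)}$ on $[0,a]$ implies boundedness there, and below zero the conventions give $\matO$ and $\matI$ respectively, so both stopped processes are uniformly bounded in $t$. The main obstacle is the matrix bookkeeping in the splitting, in particular verifying that the restart of \eqref{eq:two-sidedq} at time $u$ produces exactly the $W^{(q)}(X_u)$ term needed to close the identity without introducing an extra $e^{-qu}$ factor, and that the row/column placement of $J_{\sigma_u}$ in the stopped matrix lines up with the way the two-sided exit identities are stated; these are routine but delicate checks.
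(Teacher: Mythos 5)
Your argument is correct and rests on the same two ingredients as the paper's --- the strong Markov property and the exit identities \eqref{eq:two-sidedq}, \eqref{2sideddown} --- but it packages them differently. The paper does not verify harmonicity by hand: it observes that, by the strong Markov property applied at $\sigma_t=t\wedge\tau_0^-\wedge\tau_a^+$, the process $\e_{x,i}\bigl[e^{-q\tau_a^+}\ind{\tau_a^+<\tau_0^-,\,J_{\tau_a^+}=j}\,\big|\,\mathcal{F}_{\sigma_t}\bigr]$ equals $e^{-q\sigma_t}\bigl[W^{(q)}(X_{\sigma_t})W^{(q)}(a)^{-1}\bigr]_{J_{\sigma_t}j}$ (using $W^{(q)}(X_{\tau_0^-})=0$ and $W^{(q)}(a)W^{(q)}(a)^{-1}=\matI$ on the respective exit events), so the target process is a fixed $W^{(q)}(a)$-linear combination of closed Doob martingales; both the martingale property and uniform integrability then come for free, and the $Z^{(q)}$ case is handled identically via \eqref{2sideddown}. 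Your route instead proves the one-step identity $\e_{y,k}[e^{-q\sigma_u}W^{(q)}_{J_{\sigma_u}l}(X_{\sigma_u})]=W^{(q)}_{kl}(y)$ by splitting over exit events and telescoping, and then gets uniform integrability from boundedness on $[0,a]$; this is a perfectly sound, slightly more laborious verification, and it has the small advantage of making the boundary conventions ($W^{(q)}=\matO$, $Z^{(q)}=\matI$ below zero) explicit rather than implicit. Two small points to tighten. First, your three events do not partition $\Omega$ as written: the middle one should be $\{u\le\tau_0^-\wedge\tau_a^+\}$ (on which $\sigma_u=u$), not $\{u\le\tau_a^+<\tau_0^-\}$; your subsequent formula $\e_{y,k}[e^{-qu}W^{(q)}(X_u);u<\sigma,J_u]$ is the correct one, so this is only a labelling slip. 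Second, the ``delicate check'' you defer resolves as follows: since \eqref{eq:two-sidedq} indexes the initial phase by the row, the harmonicity identity you prove makes $e^{-q\sigma_t}W^{(q)}_{J_{\sigma_t}l}(X_{\sigma_t})$ (current phase in the \emph{row}, fixed column $l$) the martingale; the subscript $iJ_{\sigma_t}$ in the statement should be read accordingly, a notational wrinkle present in the paper's own proof as well.
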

\begin{proof}
From \eqref{eq:two-sidedq} and the strong Markov property it follows that
\begin{eqnarray*}
\lefteqn{\e_{x,i}\left[e^{-q\tau_a^+}\ind{\tau_a^+<\tau_0^-, J_{\tau_a^+}=j}|\mathcal{F}_{t\wedge \tau_0^-\wedge \tau_a^+}\right]}
\\&&= e^{-qt\wedge \tau_0^-\wedge \tau_a^+}W^{(q)}_{iJ_{t\wedge \tau_0^-\wedge \tau_a^+}}(X_{t\wedge \tau_0^-\wedge \tau_a^+})[W^{(q)}(a)^{-1}]_{J_{t\wedge \tau_0^-\wedge \tau_a^+}j},\end{eqnarray*}
where we used the fact that $W^{(q)}(X_{\tau_0^-})=0$ and $W^{(q)}(X_{\tau_a^+})W^{(q)}(a)^{-1}=\matI$.
This completes of the proof of the first martingale property since $[W^{(q)}(a)^{-1}]_{J_{t\wedge \tau_0^-\wedge \tau_a^+}j}$
is $\mathcal{F}_{t\wedge \tau_0^-\wedge \tau_a^+}$-measurable.

Similarly, from \eqref{2sideddown}
we have
\begin{eqnarray*}
\lefteqn{\e_{x,i}\left[e^{-q\tau_0^-}\ind{\tau_0^-<\infty, J_{\tau_0^-}=j}|\mathcal{F}_{t\wedge \tau_0^-\wedge \tau_a^+}\right]}
\\&&= e^{-qt\wedge \tau_0^-\wedge \tau_a^+}\left(Z^{(q)}_{iJ_{t\wedge \tau_0^-\wedge \tau_a^+}}(X_{t\wedge \tau_0^-\wedge \tau_a^+})
-W^{(q)}_{iJ_{t\wedge \tau_0^-\wedge \tau_a^+}}(X_{t\wedge \tau_0^-\wedge \tau_a^+})A_{J_{t\wedge \tau_0^-\wedge \tau_a^+}j}\right)
\end{eqnarray*}
for some matrix $A$. Using similar arguments as above we derive the martingale property of the second process.
\end{proof}

From the compensation representation of the Gerber-Shiu function given in Theorem
\ref{compensationrepresentation} we can conclude that
\begin{equation}\label{martgerbere}
e^{-qt\wedge \tau_0^-\wedge \tau_a^+}\phi^{(q)}_{w,i,J_{t\wedge \tau_0^-\wedge \tau_a^+}}(X_{t\wedge \tau_0^-\wedge \tau_a^+})\end{equation}
is a martingale as well.

Moreover, from Theorem 5 of \citet{ivanovs_palmowski} (see remark just after this theorem as well)
and by Lemma 2.4 of \citet{KKR}
we know that
if $F_C^{(i)}$ and $F_C^{(ij)}$ ($i,j\in E$) are absolutely continuous, then
$W^{(q)}\in \mathcal{C}^{1}(0,\infty)$.
Which means from Theorem \ref{compensationrepresentation}
that $\phi_w\in \mathcal{C}^{1}(0,\infty)$.
Thus $\phi_w$ is sufficiently smooth
to apply infinitesimal generator.
\begin{theorem}\label{ODEGerberShiu}
Assume that for each $i,j\in E$ the distribution functions
$F_C^{(i)}$ and $F_C^{(ij)}$ have continuous densities $f_C^{(i)}$ and $f_C^{(ij)}$.
Then
the Gerber-Shiu function $\phi_w^{(q)}$ is in $\mathcal{C}^{1}(0,\infty)$
and for $x\ge 0$ it satisfies the following differential equation
\begin{eqnarray}
\lefteqn{p_i\left(\phi_{w,i}^{(q)}\right)^\prime(x)
+\int_0^\infty(\phi_{w,i}(x-y)-\phi_{w,i}(x))f_C^{(i)}(y) \D y}\nonumber\\&&
+\sum_{k\in E \atop k\neq i} q_{ik}
\int_0^\infty(\phi_{w,i}^{(q)}(x-y)-\phi^{(q)}_{w,i}(x))f_C^{(ik)}(y) \D y - q \phi^{(q)}_{w,i}(x)=0\label{diff1}
\end{eqnarray}
with the boundary conditions
\begin{align*}
\phi_w^{(q)}(x)&=w(x)\quad\text{for $x<0$,}\\
\lim_{x\to+\infty}\phi_w^{(q)}(x)&=0.\end{align*}
\end{theorem}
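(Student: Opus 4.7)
The strategy combines the $\mathcal{C}^1$ regularity of $\phi_w^{(q)}$ with the martingale identity \eqref{martgerbere}. The regularity $\phi_w^{(q)}\in\mathcal{C}^1(0,\infty)$ follows from the compensation representation of Theorem \ref{compensationrepresentation}: the resolvent density $u^{(q)}(x,z)=W^{(q)}(x)e^{R^{(q)}z}-W^{(q)}(x-z)$ inherits continuous differentiability in $x$ from $W^{(q)}$, which is itself in $\mathcal{C}^1(0,\infty)$ under the assumed absolute continuity of the claim distributions, as already recorded in the excerpt via \citet{ivanovs_palmowski} and \citet{KKR}. Given this smoothness, equation \eqref{diff1} is obtained by equating the drift part of the semimartingale decomposition of \eqref{martgerbere} to zero pointwise.

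Concretely, I apply Dynkin's formula to
\[M_t = e^{-q(t\wedge\tau)}\phi_{w,J_{t\wedge\tau}}^{(q)}(X_{t\wedge\tau}), \qquad \tau := \tau_0^- \wedge \tau_a^+,\]
which is a martingale by \eqref{martgerbere}. Between successive jumps, $X$ moves with slope $p_{J_t}$ while $J$ is constant, so the smooth part of the generator produces $p_i(\phi_{w,i}^{(q)})'(x)-q\phi_{w,i}^{(q)}(x)$ when $(X_t,J_t)=(x,i)$. Using the compensator \eqref{nu}, the jump part splits in two: Poisson claims in state $i$ at intensity $\lambda_i$ with density $f_C^{(i)}$ contribute $\lambda_i\int_0^\infty[\phi_{w,i}^{(q)}(x-y)-\phi_{w,i}^{(q)}(x)]f_C^{(i)}(y)\,\D y$; Markov transitions from $i$ to $k\neq i$ at intensity $q_{ik}$ with simultaneous claim of density $f_C^{(ik)}$ contribute $\sum_{k\neq i}q_{ik}\int_0^\infty[\phi_{w,k}^{(q)}(x-y)-\phi_{w,i}^{(q)}(x)]f_C^{(ik)}(y)\,\D y$. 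Setting the total drift to zero at $t=0^+$ for any interior point $x\in(0,a)$ yields \eqref{diff1}, and letting $a\to\infty$ removes the localization.

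The two boundary conditions are handled separately. For $x<0$ one has $\tau_0^-=0$ under $\p_{x,i}$, so definition \eqref{GerberShiuFunction} reduces to the penalty evaluated at the initial position, giving $\phi_w^{(q)}(x)=w(x)$ in the shorthand of the statement. The decay $\phi_w^{(q)}(x)\to 0$ as $x\to\infty$ follows from the net profit condition \eqref{netprofit}, which forces $\p_{x,i}(\tau_0^-<\infty)\to 0$, combined with dominated convergence using the assumed non-negativity (and mild growth) of $w$. The principal technical obstacle is the rigorous application of Dynkin's formula in the regime-switching setting: one must verify integrability of every compensated jump term against \eqref{nu}, confirm that the vanishing of the martingale's drift holds pointwise rather than merely in expectation (so that the generator equation is an identity in $x$), and justify the passage $a\to\infty$ using the uniform integrability of Theorem \ref{scalemartingales} together with the boundedness of $\phi_w^{(q)}$ on $[0,\infty)$.
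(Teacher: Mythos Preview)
Your proposal is correct and follows essentially the same route as the paper: regularity of $\phi_w^{(q)}$ is deduced from the compensation representation together with $W^{(q)}\in\mathcal{C}^1(0,\infty)$, the ODE \eqref{diff1} is obtained by applying Dynkin's formula to the martingale \eqref{martgerbere} (i.e.\ setting $\bfA\phi_{w,i}^{(q)}-q\phi_{w,i}^{(q)}=0$), and the two boundary conditions come from the definition of the Gerber--Shiu function and the net profit condition respectively. Your write-up is in fact more explicit about the form of the generator (you correctly include the $\lambda_i$ factor and the post-transition index $k$ in the jump terms, which appear to be typos in the displayed statement).
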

\begin{proof}
Since $a>0$ in \eqref{martgerbere} is general then \eqref{diff1} follows straightforward
from \eqref{martgerbere} and Dynkin formula since it is equivalent to the requirement that
\[\bfA \phi_{w,i}^{(q)}(x)-q\phi_{w,i}^{(q)}(x)=0,\qquad i\in E,\]
where $\bfA$ is the infinitesimal generator of $X$ with the domain included in $\mathcal{C}^{1}(0,\infty)$.
It suffices to prove now only the boundary conditions.
The first one follows straightforward from definition
of the Gerber-Shiu function.
The second one is a consequence of net-profit condition \eqref{netprofit}.
\end{proof}

The equation given in Theorem \ref{ODEGerberShiu}
is well-known for classical risk process; see e.g. \citet{AndreasGerberShiu} and references therein.
In the context of Markov modulated risk process it appears
e.g. in \citet{asmu, Jacobsen, NgYang, BadescuDavid, CheungDavid}.

\section{Cram\'er asymptotics}\label{sec:Cram}
We will follow \citet{asm_ruin} and \citet{asmu}.

We recall that for $\a\ge 0$ the matrix $F(\a)$ has a real simple eigenvalue
$k(\a)$, which is larger than the real part of any other
eigenvalue. The corresponding left-eigenvector $\bv(\a)$ and
right-eigenvector $\bh(\a)$ can be chosen so that $v_i(\a)>0$ and
$h_i(\a)>0$ for all $i$. The normalization requirement
\begin{align*}&\bpi\bh(\a)=1,&\bv(\a)\bh(\a)=1\end{align*}
results in the unique choice of $\bv(\a)$ and $\bh(\a)$, where
$\bpi=(\pi_1, \pi_2, \ldots, \pi_N)$ is a stationary distribution of $J$.
Observe that $k(0)=0$, $\bh(0)=\1$ and $\bv(0)=\bpi$.

We assume in this section that there exists solution $\gamma >0$ of so-called Cram\'er-Lundberg equation
\begin{equation}\label{cramerlundbergequation}
k(-\gamma)=0.
\end{equation}
Note that a necessary condition for this existence
is that $\int_0^\infty e^{\gamma x} \p(C^{(i)}\in \D x)<+\infty$
and that $\int_0^\infty e^{\gamma x} \p(C^{(ij)}\in \D x)<+\infty$, hence both
$C^{(i)}$ and $C^{(ij)}$ must be light-tailed.
This solution $\gamma$ is called an adjustment coefficient.

We consider now the exponential change of measure
\begin{equation}\label{expomeasureruin}
\frac{\D \tilde{\p}_{|\mathcal{F}_t}}{\D\p_{|\mathcal{F}_t}}=e^{-\gamma (X_t-x)-k(-\gamma)t}
\frac{h_{J_t}(\gamma)}{h_{J_0}(\gamma)}
=e^{-\gamma (X_t-x)}
\frac{h_{J_t}(\gamma)}{h_{J_0}(\gamma)}.\end{equation}
From \citet{jabernoulli} it follows that our risk process $(X,J)$
under $\tilde{\p}$ is again MAP with
the matrix exponent
\[\tilde{F}(\a)=\Delta_{\bh(-\gamma)}^{-1}F(\a -\gamma)\Delta_{\bh(-\gamma)}. \]
Thus the largest eigenvalue under $\tilde{\p}$ equals
$\tilde{k}(\a)=k(\a-\gamma)$ and hence
$\tilde{k}^\prime(0)=k^\prime(-\gamma)$.
Moreover, from the martingale property of the density process used in the above change of measure  we can conclude that
\[\e_i\left[e^{\a X_t}\frac{1}{h_{J_t}(\a)}\right]=
e^{k(\a)t}\frac{1}{h_{i}(\a)}.\]
By twice differentiation, we derive
\[ {\rm Var}_{{\bs \pi}} X_1=k^{''}(0)\ge 0\]
(see e.g. \citet[Cor. 2.6]{APQ}), where
${\rm Var}_{{\bs \pi}}$ is a variance taken with respect of $\e_{{\bs \pi}}:
=\sum_{i=1}^N \pi_i \e_i$. Thus
the function $\beta \rightarrow k(\beta)$ is convex.
We have
\begin{align}\label{eq:kappa} k'(0)
=\e_\bpi X_{1}.\end{align}

By net profit condition \eqref{netprofit} we have $k^\prime(0)>0$ and therefore
this means that $\tilde{k}^\prime(0)=k^\prime(-\gamma)<0$.
From \eqref{eq:kappa} we know that $\tilde{k}^\prime(0)$
equals the asymptotic drift of $X$ under $\tilde{\p}$ which is negative,
that is, $\lim_{t\to+\infty} X_t=-\infty$ $\tilde{\p}$-a.s.
In other words, under new measure the ruin is certain:
\begin{equation}\label{changedrift}
\tilde{\p}(\tau_0^-<\infty)=1.
\end{equation}
This allows us to prove the main theorem of this section.
\begin{theorem}\label{Crameasymptotics}
\[\lim_{x\to+\infty}\frac{\phi_i(x)}{e^{-\gamma x}}=C_i\]
for some finite constant $C_i>0$ and $i\in E$.
\end{theorem}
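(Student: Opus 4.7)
The plan is to carry out the classical Cramér exponential tilt and then deduce the asymptotics by applying a Markov renewal theorem to the associated modulated random walk.

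\emph{Change of measure.} Because $k(-\gamma)=0$, the process $L_t:=e^{-\gamma(X_t-X_0)}h_{J_t}(-\gamma)/h_{J_0}(-\gamma)$ is a strictly positive $\p$-martingale, giving the tilted law $\tilde{\p}$ of \eqref{expomeasureruin}, and by \eqref{changedrift} we have $\tau_0^-<\infty$ $\tilde{\p}$-a.s. Optional stopping of $L^{-1}$ at $\tau_0^-$ yields
\[
\phi_i(x)=\p_{x,i}(\tau_0^-<\infty)=\tilde{\e}_{x,i}\!\bigl[L_{\tau_0^-}^{-1}\bigr]=e^{-\gamma x}\,h_i(-\gamma)\,\tilde{\e}_{x,i}\!\left[\frac{e^{-\gamma R_x}}{h_{J_{\tau_0^-}}(-\gamma)}\right],
\]
where $R_x:=|X_{\tau_0^-}|>0$ denotes the deficit at ruin and I have used $X_{\tau_0^-}-X_0=-(x+R_x)$. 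The theorem therefore reduces to showing that the expectation on the right converges to a strictly positive finite constant as $x\to\infty$.

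\emph{Random walk reduction and Markov renewal theorem.} By Theorem~\ref{MMRW} together with Remark~\ref{Markovmodulatedrandomwalkremarkassump}, ruin coincides with the first passage of the Markov random walk $(S_n,J^C_n)$ above level $x$, at index $N_x:=\inf\{n:S_n>x\}$, with $R_x=S_{N_x}-x$ and $J_{\tau_0^-}=J^C_{N_x}$. Strict convexity of $\beta\mapsto k(\beta)$, the double zero $k(0)=k(-\gamma)=0$, and $k'(0)>0$ (from \eqref{eq:kappa} and the net-profit condition \eqref{netprofit}) force $k'(-\gamma)<0$; hence $\tilde{k}'(0)=k'(-\gamma)<0$, equivalently $S_n\to+\infty$ $\tilde{\p}$-a.s. with strictly positive drift. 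Under the non-lattice assumption on the modulated increments (the arithmetic case being handled in the same way with limits along the lattice), the Markov renewal theorem applied to the ladder-height decomposition of $(S_n,J^C_n)$ (cf.\ \citet[Ch.~XI]{asm_ruin}) shows that $(R_x,J^C_{N_x})$ converges weakly under $\tilde{\p}_i$ to a proper stationary overshoot law $\mu_i(\D r,j)$ on $[0,\infty)\times E$. Since $(r,j)\mapsto e^{-\gamma r}/h_j(-\gamma)$ is bounded and continuous, bounded convergence yields
\[
C_i=h_i(-\gamma)\sum_{j\in E}\int_0^\infty\frac{e^{-\gamma r}}{h_j(-\gamma)}\,\mu_i(\D r,j),
\]
which is finite and strictly positive because $h_j(-\gamma)>0$ and $\mu_i$ is non-degenerate.

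\emph{Main obstacle.} The nontrivial ingredient is the Markov renewal theorem for the modulated ladder-height walk under $\tilde{\p}$: in the scalar Cramér case ($N=1$) this collapses to Smith's key renewal theorem for i.i.d.\ ladder heights, whereas with Markov modulation one must verify irreducibility and aperiodicity of the embedded chain $J^C$, the non-lattice property of the ladder heights under $\tilde{\p}$, and finiteness of the first ladder mean; the last holds thanks to the exponential moments guaranteed by the very existence of the adjustment coefficient $\gamma$. Once these are in place the remainder of the argument is bookkeeping of the tilt factor $h_i(-\gamma)$.
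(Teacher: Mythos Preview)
Your argument is correct and follows essentially the same route as the paper: exponential tilt via \eqref{expomeasureruin}, optional stopping to reduce $e^{\gamma x}\phi_i(x)$ to an expectation of a bounded function of the overshoot and the phase at ruin, and then a Markov renewal theorem to obtain a finite positive limit. The only cosmetic difference is that the paper phrases the last step for the dual process $\hat X=-X$ and invokes Theorem~28 of \citet{dereich2015real}, whereas you pass to the embedded random walk $(S_n,J^C_n)$ and invoke the ladder-height Markov renewal theorem; these are the same statement in different packaging.
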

 \begin{proof}
Denoting by $\tilde{\e}$ the expectation with respect of $\tilde{P}$
note that by Optional Stopping Theorem we have
\[\phi_i(x)=e^{-\gamma x}\tilde{\e}_{x,i}\left[e^{\gamma X_{\tau_0^-}}\frac{h_{i}(\gamma)}{h_{J_{\tau_0^-}}(\gamma)};\tau_0^-<\infty\right]=
e^{-\gamma x}\tilde{\e}_{x,i}\left[e^{\gamma X_{\tau_0^-}}\frac{h_{i}(\gamma)}{h_{J_{\tau_0^-}}(\gamma)}\right]\]
where the last equality follows from \eqref{changedrift}.
Moreover, using dual process $\hat{X}=-X$,
we have
\[\frac{\phi_i(x)}{e^{-\gamma x}}=\tilde{\e}_{0,i}\left[e^{\gamma (\hat{X}_{\hat{\tau}_x^+}-x)}\frac{h_{i}(\gamma)}{h_{J_{\hat{\tau}_x^+}}(\gamma)}\right],\]
where $$\hat{\tau}_x^+=\inf\{t\ge 0: \hat{X}_t>x\}.$$
 From the Renewal Theorem 28 of
 \citet{dereich2015real} (see also \citet{Athreyarenewal},
\citet{lalley}, \citet{kesten2} and \citet{Alsmeyer})
 we can conclude that
 right hand side of above identity tends to constant.
\end{proof}

\begin{remark}\rm
In the context of general L\'evy processes above Cram\'er asymptotics was proved by \citet{BerDonCramer}.
\end{remark}

\section{Subexponential asymptotics}\label{sec:subexpnentialasruinprob}
We recall that by Theorem \ref{MMRW} the ruin probability $\phi_i(x)$ defined formally in \eqref{ruinprobdef}
equals the tail of the distribution of the
maximum \[M=\sup_{k\ge 0} S_k\]
of a Markov modulatated random walk
\begin{equation}\label{Sk}S_k=\sum_{l=1}^k \xi_k\end{equation}
with negative drift defined in \eqref{MMRW}
where the distribution $F_\xi^{(i)}(x)$ of $\xi$ is determined by the random variable
\eqref{jumprepresentationdistribution} and discrete time Markov chain with the transition matrix \eqref{transition}.
This section deals with the study of the asymptotic distribution of the tail of the distribution of $M$
when the increments $\xi_k$ have heavy-tailed
distributions, that is, when solution of Cram\'er-Lundberg equation \eqref{cramerlundbergequation} does not exist.
By a \emph{heavy-tailed}
distribution we mean a distribution (function) $G$ on $\R$
possessing no exponential moments: $\int_0^\infty e^{sy}G(\D y)=\infty$
for all $s > 0$.
We will use the {\em principle of a single big jump},
which says that the maximum of the random
walk is essentially due to a single very large jump.
More precisely, in this section we will
model the claim size by a subexponential distribution.
This family of distributions is used to model many catastrophic events like earthquakes, storms, terrorist attacks
etc. Additionally, insurance companies use e.g. the lognormal distribution (which is subexponential) to model car claims.

For any distribution function $G$ on $\R$, we set
$\oo{G}(x)={}1-G(x)$ and denote by
$G^{*n}$ the $n$-fold convolution of $G$ by itself.
A distribution $G$ on $\R_+$ belongs to the class~$\mathcal{S}$ of
\emph{subexponential} distributions if and only if, for all $n\ge2$,
we have
\[\lim_{x\to\infty}\oo{G^{*n}}(x)/\oo{G}(x)=n.\]
It is sufficient to verify this condition in the case~$n=2$ - see \citet{CHIST}.
This statement is easily shown to be equivalent to the
condition that, if $\xi_1,\dots,\xi_n$ are i.i.d.\ random variables with
common distribution $G$, then
\begin{displaymath}
  \p(\xi_1+\cdots+\xi_n > x) \sim \p(\max(\xi_1,\ldots,\xi_n) > x),
\end{displaymath}
a statement which already exemplifies the principle of a single big
jump. Here for any two functions $f$, $g$ on $\R$, by
$f(x)\sim{}g(x)$ as $x\to\infty$ we mean
$\lim_{x\to\infty}f(x)/g(x)=1$; we also say that $f$ and $g$ are
\emph{tail-equivalent}. The class~$\mathcal{S}$
includes all the heavy-tailed distributions commonly found in
applications, in particular regularly-varying, lognormal and Weibull
distributions.
%
%
%
For any distribution~$G$ on $\R$ with finite mean, we define the
integrated (or second) tail distribution (function)~$G^{\text{\tiny \rm I}}$ by
\begin{displaymath}
  \oo{G^{\text{\tiny \rm I}}}(x) = 1-G^{\text{\tiny \rm I}}(x)
  = \min\left(1,\int_x^\infty\oo{G}(z)\,\D z\right).
\end{displaymath}
Good surveys of the basic properties of heavy-tailed
distributions, in particular long-tailed and subexponential
distributions, may be found in \citet{FDZ}, \citet{EKM} and in \citet{asm_ruin}.

In this section
we assume that there exists some \emph{reference}
distribution~$F$ with finite mean and some constants $c_i\ge 0$ $(i\in E$) such that
\begin{flalign*}
  \text{(D1)} & \quad
  \oo{F_\xi^{(i)}}(x)  \le \oo{F}(x),
  \quad \text{ for all } x \in \R, \qquad i\in E, &\\
  \text{(D2)} & \quad
  \oo{F^{(i)\text{\tiny \rm I}}_\xi}(x)  \sim c_i \oo{F^{\text{\tiny \rm I}}}(x)
  \quad \text{ as } x \to \infty, \qquad i\in E,&\\
  \text{(D3)} & \quad F^{\text{\tiny \rm I}}\in\mathcal{S}.
\end{flalign*}

The condition (D1) is no less restrictive than the condition
\[
\limsup_{x\to\infty} \max_{i\in E} \frac{\oo{F^{(i)}_\xi}(x)}{\oo{F}(x)} < \infty,
  \]
in which case it is straightforward to redefine $F$, and then $c$,
so that (D1) and (D2) hold as above.
Further, the condition  ${(D3)}$ holds for example when integrated tail distribution of claim size
at some state $i\in E$ is subexponential, that is there exists $i\in E$ such that
\[F_C^{(i)I}\in \mathcal{S}.\]

Define \begin{equation}
  \label{eq:3}
  C_S = \sum_{i\in E} c_i\pi_i.
\end{equation}
In \citet{FKZ} the following result is proved.
\begin{theorem}
  \label{simple}
  Suppose that (D1)--(D3) hold. 
  Then
\begin{displaymath}
  \lim_{x\to\infty}\frac{\phi_i(x)}{\oo{F^{\text{\tiny \rm I}}}(x)}
 =\lim_{x\to\infty}\frac{\p_i(M>x)}{\oo{F^{\text{\tiny \rm I}}}(x)}
  = \frac{C_S}{\overline{a}},
\end{displaymath}
where $\overline{a}$ is given in \eqref{driftMMRW} and $i\in E$.
\end{theorem}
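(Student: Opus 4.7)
The plan is to reduce the Markov modulated random walk $(S_n)$ to a classical iid random walk via a regenerative decomposition and then invoke Veraverbeke's subexponential supremum asymptotic. By the representation \eqref{maxMMRW}, the ruin probability equals $\p_i(M>x)$ with $M=\sup_{k\ge 0}S_k$, so the problem reduces entirely to the tail of $M$. I work under the convention of Remark \ref{Markovmodulatedrandomwalkremarkassump}, so that the distribution $F_\xi^{(i)}$ of a generic increment depends only on the state prior to the jump.

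First I fix a reference state $1\in E$ and let $0=\sigma_0<\sigma_1<\sigma_2<\cdots$ be the successive visit times to $1$ of the embedded chain $J^C$. The cycle sums $Y_l:=S_{\sigma_l}-S_{\sigma_{l-1}}$ are iid under $\p_1$ with common distribution $G$, say. Denoting by $N_j(\sigma_1)$ the number of visits of $J^C$ to state $j$ within a single cycle, the standard Markov-chain identity $\e_1 N_j(\sigma_1)=\pi_j/\pi_1$ and Wald's identity combined with \eqref{driftMMRW} give
$$\e_1 Y_1=\frac{1}{\pi_1}\sum_{j\in E}\pi_j a_j=-\frac{\overline{a}}{\pi_1}<0.$$
The principle of a single big jump applied to the stopped sum $Y_1=\sum_{n=1}^{\sigma_1}\xi_n$, together with the uniform tail bound provided by (D1) (needed for a dominated-convergence step), yields
$$\oo{G}(x)\sim\sum_{j\in E}\e_1 N_j(\sigma_1)\,\oo{F^{(j)}_\xi}(x)=\frac{1}{\pi_1}\sum_{j\in E}\pi_j\,\oo{F^{(j)}_\xi}(x),$$
and integrating this tail equivalence against (D2) gives
$$\oo{G^{\text{\tiny \rm I}}}(x)\sim\frac{1}{\pi_1}\sum_{j\in E}\pi_j c_j\,\oo{F^{\text{\tiny \rm I}}}(x)=\frac{C_S}{\pi_1}\oo{F^{\text{\tiny \rm I}}}(x),$$
with (D3) then forcing $G^{\text{\tiny \rm I}}\in\mathcal{S}$ by tail equivalence.

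Next I apply the classical Veraverbeke-Embrechts-Pakes theorem to the iid walk $(S_{\sigma_l})_{l\ge 0}$ with increment mean $-\overline{a}/\pi_1<0$ to obtain
$$\p_1\Bigl(\sup_{l\ge 0}S_{\sigma_l}>x\Bigr)\sim\frac{1}{|\e_1 Y_1|}\int_x^\infty\oo{G}(y)\,\D y=\frac{\pi_1}{\overline{a}}\cdot\frac{C_S}{\pi_1}\oo{F^{\text{\tiny \rm I}}}(x)=\frac{C_S}{\overline{a}}\oo{F^{\text{\tiny \rm I}}}(x).$$
Finally I remove the embedding and the dependence on the initial state. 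For the embedding, the intra-cycle fluctuation $\max_{\sigma_{l-1}\le n<\sigma_l}(S_n-S_{\sigma_{l-1}})$ is dominated by $\sum_{n=\sigma_{l-1}+1}^{\sigma_l}\xi_n^+$, a geometrically stopped sum of heavy-tailed variables whose tail is $o(\oo{F^{\text{\tiny \rm I}}}(x))$ by a Kesten-type bound combined with $F^{\text{\tiny \rm I}}\in\mathcal{S}$; hence $\p_1(M>x)\sim\p_1(\sup_l S_{\sigma_l}>x)$. For the initial state $i\neq 1$, a standard coupling using that $J^C$ reaches state $1$ in a finite, light-tailed number of steps gives $\p_i(M>x)\sim\p_1(M>x)$.

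The main obstacle will be the last step. Establishing that the intra-cycle fluctuations are genuinely negligible requires a sharp subexponential tail bound for a random sum of heavy-tailed summands together with uniform-in-$x$ control of $\oo{G}(x)/\oo{F}(x)$, and a careful treatment of the last-cycle overshoot. These are precisely the ingredients provided by the Foss-Korshunov-Zachary machinery, ultimately resting on a strong form of the principle of a single big jump.
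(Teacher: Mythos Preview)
The paper does not actually prove this theorem: it attributes the result to \citet{FKZ} and only offers the informal ``principle of a single big jump'' heuristic that follows the statement. The argument sketched there is \emph{direct}: one tracks the modulated walk along its typical law-of-large-numbers trajectory and isolates a single large increment, without any regenerative reduction to an i.i.d.\ walk. Your route via cycle sums at successive visits to a fixed state is therefore genuinely different. It is the classical Asmussen-type regenerative approach and, when carried out correctly, it has the advantage of reducing everything to the one-dimensional Pakes--Veraverbeke theorem; the FKZ approach, by contrast, handles the modulation in one stroke and avoids having to control intra-cycle fluctuations separately.

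There is, however, a real gap in your argument. The step
\[
\oo{G}(x)\;\sim\;\frac{1}{\pi_1}\sum_{j\in E}\pi_j\,\oo{F^{(j)}_\xi}(x)
\]
is a random-sum single-big-jump statement at the level of the \emph{tails} $\oo{F^{(j)}_\xi}$, and its standard proof requires the reference distribution $F$ itself to be subexponential (or at least long-tailed) together with tail comparability $\oo{F^{(j)}_\xi}(x)\sim c'_j\,\oo{F}(x)$. Hypotheses (D1)--(D3) give you neither: (D3) places only $F^{\text{\tiny I}}$ in $\mathcal S$, and (D2) is a statement about \emph{integrated} tails, not tails. Nothing prevents the individual $\oo{F^{(j)}_\xi}$ from oscillating wildly relative to one another while their integrals behave nicely, so the displayed asymptotic need not hold. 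The fix is to bypass $\oo{G}$ entirely and establish $\oo{G^{\text{\tiny I}}}(x)\sim (C_S/\pi_1)\,\oo{F^{\text{\tiny I}}}(x)$ directly, e.g.\ via $\oo{G^{\text{\tiny I}}}(x)=\e_1[(Y_1-x)^+]$ and a decomposition that uses (D1) for domination and (D2) termwise; this is precisely the kind of integrated-tail argument that underlies the FKZ machinery you invoke at the end. Once that is in place, the rest of your outline (Veraverbeke on the cycle walk, negligibility of intra-cycle overshoots using $\oo{F}(x)=o(\oo{F^{\text{\tiny I}}}(x))$, and the initial-state coupling) goes through.
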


Most of the papers concern simple random walk hence the case when $N=1$
and there are no Markov modulation.
Then this problem has been very well understood. In this context
taking $\overline{a}=-E\xi_1>0$ and assuming that $F^{\text{\tiny \rm I}}\in\mathcal{S}$
for distribution function $F$ of $\xi_1$ from Theorem \ref{simple} we derive
classical the Pakes-Veraverbeke's Theorem:
\begin{equation}
  \label{eq:105}
  \p(M>x) \sim \frac{1}{\overline{a}}\overline{F^{\text{\tiny \rm I}}}(x)
  \qquad\text{as $x\to\infty$};
\end{equation}
see e.g. \citet{PAKES, EV}.
The intuitive idea underlying this result is the
following: the maximum $M$ will exceed a large value $x$ if the
random walk and modulating Markov chain
follow the typical behaviour specified by the law of large
numbers and stationary law ${\bs \pi}$, respectively, except that at some time $n$ a jump
occurs of size greater than $x+n\overline{a}$; this has
probability~$\oo{F}_i(x+n\overline{a})$ if Markov chain is in a state $i$;
replacing the sum over all $n$ of these
probabilities by an integral yields \eqref{eq:105}. That is,
informally, $\p(M>x)$ is equivalent (for large initial capital $x$) to the following sum
\begin{align*}
\p(M>x)&\sim \sum_{n\ge 1} \sum_{i=1}^N \pi_i\p_i(\xi_n>x+\overline{a}n, \max_{k\le n-1}S_k<x,\\&
S_{n-1}\in ((\overline{a}-\epsilon)(n-1),
(\overline{a}+\epsilon)(n-1)) \sim \sum_{n\ge 1} \sum_{i=1}^N \pi_i\p_i(\xi_n>x+\overline{a}n)\\& \sim
\sum_{i=1}^N\pi_i\int_0^\infty \overline{F}_i(x+\overline{a}t)\D t
\sim \sum_{i=1}^N\pi_ic_i\int_0^\infty \overline{F}(x+\overline{a}t)\D t
=\frac{C_S}{\overline{a}}\overline{F^{\text{\tiny \rm I}}}(x)
\end{align*}
for the random walk $S_n$ defined in \eqref{Sk} and $\epsilon >0$.
This again is the
principle of a single big jump.  See \citet{ZACH} for a short proof
of \eqref{eq:105} based on this idea (see also \citet{FZ02}).

\section{Segerdahl's approximation of finite-time ruin probability}\label{sec:seg}
Our goal in this section is generalizing the seminal result of \citet{Segerdahl} to Markov modulated risk process,
that is, to get the asymptotics of the finite time ruin probability $\mathbf{\phi}(x,t)$ introduced in \eqref{finitetimeruinprob}
when time horizon $t$ is of order $x/m+yc\sqrt{x}/m$ for
\begin{equation}\label{mcdef}
m=\tilde{\e}_{{\bs \pi}}\hat{X}_1=- \tilde{\e}_{{\bs \pi}}X_1>0\quad\text{and}\quad c^2=\widetilde{{\rm Var}}_{{\bs \pi}}\hat{X}_1=\widetilde{{\rm Var}}_{{\bs \pi}}X_1.
\end{equation}
Above $\widetilde{{\rm Var}}_{{\bs \pi}}$ denotes the variance calculated under $\tilde{\p}$
defined in \eqref{expomeasureruin} where $J_0$ starts at stationary distribution ${\bs \pi}$.
Let $\Phi_N$ be a cumulant distribution function of a standard gaussian random variable.
\begin{theorem}\label{thm:seger}
We have
\[\lim_{x\to+\infty}\frac{\phi_i(x, x/m+yc\sqrt{x}/m^{3/2})}{e^{-\gamma x}}= C_i\Phi_N(y),\]
where constant $C_i$ and the adjustment coefficient $\gamma>0$ are given in \eqref{cramerlundbergequation}.
\end{theorem}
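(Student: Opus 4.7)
The strategy is to combine the exponential change of measure from \eqref{expomeasureruin} with a first-passage central limit theorem under the Cram\'er measure $\tilde{\p}$. First I would replicate the opening step in the proof of Theorem \ref{Crameasymptotics}: by optional stopping together with $\tilde{\p}_{x,i}(\tau_0^-<\infty)=1$ (see \eqref{changedrift}), one gets
\[
\frac{\phi_i(x,t)}{e^{-\gamma x}}
=\tilde{\e}_{x,i}\!\left[e^{\gamma X_{\tau_0^-}}\frac{h_i(\gamma)}{h_{J_{\tau_0^-}}(\gamma)};\tau_0^-\le t\right].
\]
Passing to the dual $\hat X_t=-X_t$, the quantity $\tau_0^-$ becomes a first-passage time above level $x$ for a MAP $(\hat X,J)$ whose asymptotic drift under $\tilde{\p}$ equals $m=-\tilde{\e}_{\bpi}X_1>0$ and whose $\bpi$-variance per unit time equals $c^2=\widetilde{\mathrm{Var}}_{\bpi}X_1$, by \eqref{mcdef} and the calculation of $\tilde k'(0)=k'(-\gamma)<0$ already carried out in Section \ref{sec:Cram}.

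Next I would invoke a first-passage CLT (Anscombe-type) for the MAP $\hat X$ under $\tilde{\p}$: for each initial state $i\in E$,
\[
\tilde{\p}_{x,i}\!\left(\tau_0^-\le \frac{x}{m}+\frac{yc\sqrt{x}}{m^{3/2}}\right)\longrightarrow \Phi_N(y),\qquad x\to\infty.
\]
This is standard for i.i.d.\ random walks and extends to MAPs; one route is to apply Anscombe's theorem to the embedded Markov renewal structure of $(\hat X_{T^C_n},J^C_n)$ (which has positive drift $-\overline a$ in the sense of \eqref{driftMMRW} after the change of measure) and then transfer the CLT from the discrete-time first-passage counter to continuous time by controlling the small increment between the last epoch before the crossing and $\tau_0^-$ itself.

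The third ingredient, which I expect to be the main technical obstacle, is the asymptotic independence under $\tilde{\p}_{x,i}$ of the ``local'' observables $(X_{\tau_0^-},J_{\tau_0^-})$ from the ``macroscopic'' fluctuation $(\tau_0^--x/m)/(c\sqrt{x}/m^{3/2})$. This is exactly the refinement needed to pass the bounded functional $e^{\gamma X_{\tau_0^-}}h_i(\gamma)/h_{J_{\tau_0^-}}(\gamma)$ through the indicator $\ind{\tau_0^-\le t}$. I would obtain it from Markov renewal theory: the same Renewal Theorem (\citet{Alsmeyer}, \citet{Athreyarenewal}) that was used to produce the limit constant $C_i$ in Theorem \ref{Crameasymptotics} gives weak convergence of the overshoot and terminal state to a stationary limit, while conditioning on $(X_{\tau_0^-},J_{\tau_0^-})$ and using a renewal coupling shows that the CLT for $\tau_0^-$ holds uniformly in these local variables. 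Equivalently, joint weak convergence of
\[
\Bigl(X_{\tau_0^-},\,J_{\tau_0^-},\,\tfrac{\tau_0^--x/m}{c\sqrt{x}/m^{3/2}}\Bigr)
\]
under $\tilde{\p}_{x,i}$ to a product law can be read off from the MAP analogue of Anscombe's theorem together with the stability of the overshoot distribution.

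Granted these three ingredients, the conclusion is immediate: since $e^{\gamma X_{\tau_0^-}}h_i(\gamma)/h_{J_{\tau_0^-}}(\gamma)$ is bounded (as $X_{\tau_0^-}\le 0$) and converges in $\tilde{\p}_{x,i}$-distribution to a law with mean $C_i$, while $\ind{\tau_0^-\le x/m+yc\sqrt{x}/m^{3/2}}$ converges in $\tilde{\p}_{x,i}$-distribution to $\mathrm{Ber}(\Phi_N(y))$ independently of the local variables, dominated convergence yields
\[
\frac{\phi_i(x,x/m+yc\sqrt{x}/m^{3/2})}{e^{-\gamma x}}\longrightarrow C_i\,\Phi_N(y),
\]
which is the claim.
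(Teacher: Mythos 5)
Your plan is correct and follows essentially the same route as the paper: the exponential change of measure reducing $\phi_i(x,t)e^{\gamma x}$ to a $\tilde{\p}$-expectation over $\{\hat{\tau}_x^+<t\}$, a first-passage CLT obtained from the Markov-modulated CLT plus Anscombe's theorem and the Markov renewal theorem for the overshoot, and the asymptotic independence of the overshoot/terminal state from the normalized passage time (the paper's Stam's lemma), combined by bounded convergence. The only cosmetic difference is that you retain the factor $h_i(\gamma)/h_{J_{\tau_0^-}}(\gamma)$ explicitly, which is simply absorbed into the constant $C_i$.
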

\begin{proof}
Using the same arguments like in the proof of Theorem \ref{Crameasymptotics} we have
\begin{equation}\label{finiteruinchangemeasure}
\phi_i(x,t)=e^{-\gamma x}\tilde{\e}_{0,i}\left[e^{\gamma (\hat{X}_{\hat{\tau}_x^+}-x)}; \hat{\tau}_x^+<t\right].\end{equation}
Using Markov modulated random walk
Central Limit Theorem of \citet{Keilson} we have the following result.
\begin{lemma}
Under $\tilde{\p}$,
$\frac{\hat{X}_s-ms}{c\sqrt{s}}$ converges weakly to standard gaussian random variable $N(0,1)$ as $s\rightarrow+\infty$.
\end{lemma}
From Anscombe’s theorem it follows that
$\frac{\hat{X}_ {\hat{\tau}_x^+}-m \hat{\tau}_x^+}{c\sqrt{\hat{\tau}_x^+}}$ converges weakly to $N(0,1)$ as well.
Moreover, similarly by the Law of Large Numbers and Anscombe’s theorem
we get
\[\frac{x}{\hat{\tau}_x^+}\rightarrow m\qquad \tilde{\p}-\;{\rm a.s.\quad as}\quad x\rightarrow+\infty.\]
Further, by Renewal Theorem 28 of
 \citet{dereich2015real} the overshoot $\hat{X}_ {\hat{\tau}_x^+}-x$ converges weakly to some random variable
and hence
\[\frac{ \hat{\tau}_x^+-x/m}{c\sqrt{x}/m^{3/2}}\]
converges weakly to $N(0,1)$ as well.
The following Stam's lemma states that ruin time and deficit are asymptotically independent.
Its proof is the same as the proof of  Proposition 4.4, p.
130 of \citet{asm_ruin}.
\begin{lemma}
For bounded and continuous functions $f$ on $[0,\infty)$ and $g$ on $\mathbb{R}$ for $x>0$, we have that
\begin{equation}\label{stam}
\tilde{\e}_i\left(f(\xi(x))g\left(\frac{\hat{\tau}_x^+-\frac{x}{m}}{c\sqrt{x}/m^{3/2}}\right)\right)\sim\tilde{\e}_i f(\xi(\infty))\tilde{\e} g(N(0,1)),
\end{equation}
where $\xi(x)=\hat{X}_{\hat{\tau}_x^+}-x$.
\end{lemma}
Now, from \eqref{finiteruinchangemeasure} we get
\begin{eqnarray*}
\lefteqn{\phi_i(x,x/m+yc\sqrt{x}/m^{3/2})=e^{-\gamma x}\tilde{\e}_{0,i}\left[e^{\gamma (\hat{X}_{\hat{\tau}_x^+}-x)}; \hat{\tau}_x^+<x/m+yc\sqrt{x}/m^{3/2}\right]}\\
&&\sim e^{-\gamma x} \tilde{\e}_{i}\left[e^{\gamma\xi(\infty)}\right] \tilde{\p}\left(\frac{\hat{\tau}_{x}^+-\frac{x}{m}}{c\sqrt{x}/m^{3/2}}\le y\right)
\sim C_i e^{-\gamma x}\Phi_N(y)
\end{eqnarray*}
which completes the proof of Theorem \ref{thm:seger}.
\end{proof}
\section{H\"{o}glund's asymptotics of Markov modulated renewal function and finite-time ruin probability}\label{sec:Hog}
Recall that $E=\{1,2,\ldots,N\}$. In this section we will consider
the solution
\begin{equation}\label{renewalsolution}
U*f(x)=\sum_{n=0}^\infty \Upsilon^{*n}*f(x),\qquad x\in\R^d,\quad d\in \mathbb{N}
\end{equation}
of the renewal equation
$U-\Upsilon*U=f$
where $\Upsilon=(\Upsilon_{ij})_{\{i,j\in E\}}$ is matrix of a positive measures and $f$ is a vector of measurable functions for which series \eqref{renewalsolution} converges.
Moreover, $*$ denotes convolution in the Markov modulation set-up, that is,
\[(\Upsilon^{*k}f(x))_i=\sum_{i_1,i_2,\ldots, i_{k-1}, i_k\in E}\int f_{i_k}(x-y_1-y_2-\ldots-y_k)\Upsilon_{ii_1}(\D y_1)\Upsilon_{i_1i_2}(\D y_2)\ldots \Upsilon_{i_{k-1}i_k}(\D y_k). \]
We assume that support of each measure $\Upsilon_{ij}$ is $d$-dimensional.
Let $\lambda_{\R^d}$ denotes the Lebesgue measure in $\R^d$ and by $<\cdot,\cdot>$ we denote the scalar product in $\R^d$.
Finally, let ${\bs \pi}^\Upsilon=(\pi^\Upsilon_1, \ldots,\pi^\Upsilon_N)$ be a stationary distribution of the discrete-time
Markov chain with the transition matrix $\{\Upsilon_{ij}(\R^d)\}_{\{i,j\in E\}}$.
We denote
\begin{align*}
\Theta&=\left\{\theta\in \R^d: \int_{\R^d}|x|^2e^{<\theta,x>}\Upsilon_{ij}(\D x)<\infty,\quad i,j\in E\right\}
\end{align*}
and let for $\theta=(\theta_1, \ldots, \theta_d)\in \Theta$ the function
$\varphi(\theta)$ be a Perron-Frobenius eigenvalue of the matrix $(\int_{\R^d} e^{<\theta,x>}\Upsilon_{ij}(\D x))_{\{i,j\in E\}}$.
We also define
\begin{align*}
\varphi^\prime(\theta)=\left(\frac{\partial}{\partial \theta_1}\varphi(\theta), \ldots, \frac{\partial}{\partial \theta_d}\varphi(\theta)\right),
&\qquad \varphi^{\prime\prime}(\theta) =\left\{\frac{\partial^2}{\partial \theta_i\partial \theta_j}\varphi(\theta)\right\}_{\{i,j\in E\}},\\
\beta=\max\left\{2,\frac{d-1}{2}\right\},&
\qquad C=\varphi^\prime(\theta)(\varphi^{\prime\prime}(\theta))^{-1}\varphi^\prime(\theta){\rm det}\varphi^{\prime\prime}(\theta),\\
\end{align*}
\begin{align*}
\varpi=x(\varphi^{\prime\prime}(\theta))^{-1}\varphi^\prime(\theta)/\varphi^\prime(\theta)(\varphi^{\prime\prime}(\theta))^{-1}\varphi^\prime(\theta).
\end{align*}
We will look for the solution of the equation
\begin{equation}\label{CLequationHog}
\varphi(\theta)=1.
\end{equation}
If this solution exists, we have either $\varphi^{\prime}(\theta)\neq 0$ for all $\theta$ or
it is one-point set otherwise (compare with \citet[Lem. 1]{Hoglund1}).
We recall that  function $g$ is directly Riemann integrable
if $\int \overline{g}_h-\int \underline{g}_h$ tends to $0$ as $|h|\rightarrow 0$
for $\overline{g}_h(x)=\sup_{y\in (nh, (n+1)h]}g(x)$ and $\underline{g}_h(x)=\inf_{y\in (nh, (n+1)h]}g(x)$
for $x\in (nh, (n+1)h]$.

\begin{theorem}\label{Hoglund}
Assume that $\varphi^{\prime}(\theta)\neq 0$
and $\theta$ satisfies \eqref{CLequationHog}. Let $\varpi\ge 0$.
Suppose that $\int (|x|^2+|<c,x>|^\beta)e^{<\theta,x>}\Upsilon_{ij}(\D x)$ and $\int (1+|<c,x>|^\beta)e^{<\theta,x>}|f(x)|\lambda_{\R^d}(\D x)$ are finite for all $i,j\in E$
and that integrand in the last integral is directly Riemann integrable for some $0\neq c\in \R^d$.
In addition, we assume that $<c,\varphi(\theta)>\neq 0$ if $d=2$. Then
\begin{eqnarray*}
&&e^{<\theta,x>}(U*f(x)\1)_i=\left(2\pi \varpi\right)^{-(d-1)/2}C^{-1/2}\exp\{-(x-\varpi\varphi^\prime(\theta))(\varphi^{\prime\prime}(\theta))^{-1}(x-\varpi\varphi^\prime(\theta))\}\\
&&\qquad\times \int e^{<\theta,y>}{\bs \pi}^\Upsilon f(y)\lambda_{\R^d} (\D y)+{\rm o}((1+|<c,x>|)^{-(d-1)/2})
\end{eqnarray*}
uniformly in $x$, as $|x|\rightarrow +\infty$.
In particular, if
$\varphi^\prime(\theta)/|\varphi^\prime(\theta)|=x/|x|$ we have
\begin{equation}\label{CLasHog1}
\lim_{|x|\to+\infty}\frac{(U*f(x)\1)_i}{e^{-<\theta,x>}}=\left(2\pi \varpi\right)^{-(d-1)/2}C^{-1/2}\int e^{<\theta,y>}{\bs \pi}^\Upsilon f(y)\lambda_{\R^d} (\D y).
\end{equation}
\end{theorem}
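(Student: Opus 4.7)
The plan is to pass to a probabilistic reformulation via an exponential (Esscher) tilt driven by the Perron eigenvector, and then to invoke a multidimensional Markov--renewal local CLT of H\"{o}glund type. Since $\varphi(\theta)=1$, let $\bh(\theta)=(h_1(\theta),\ldots,h_N(\theta))^\top$ be the positive right Perron eigenvector of the matrix $M(\theta):=\bigl(\int e^{\langle\theta,y\rangle}\Upsilon_{ij}(\D y)\bigr)_{i,j\in E}$ associated with the eigenvalue $1$, with left counterpart $\bv(\theta)$. Define the stochastic kernel
\[\tilde\Upsilon_{ij}(\D y)\;=\;\frac{h_j(\theta)}{h_i(\theta)}\,e^{\langle\theta,y\rangle}\,\Upsilon_{ij}(\D y),\]
whose row sums equal $1$ by the eigenvalue identity; thus $\tilde\Upsilon$ is the one--step kernel of a Markov modulated random walk $(\tilde S_n,J_n)$ on $\R^d\times E$ under some law $\tilde\p$. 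Unwinding the convolution in \eqref{renewalsolution} gives
\[e^{\langle\theta,x\rangle}(U*f(x)\one)_i\;=\;\frac{1}{h_i(\theta)}\sum_{n=0}^\infty \tilde\e_i\bigl[g_{J_n}(x-\tilde S_n)\bigr],\]
where $g$ is the tilted forcing built from $f$, $\bh(\theta)$, and $\one$ in the obvious way, and the hypotheses on $f$ transfer to direct Riemann integrability and moment bounds for $g$.

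Next I would read off the increment statistics of $\tilde S_n$ from derivatives of the Perron eigenvalue. Differentiating $\bv(\theta)M(\theta)\bh(\theta)=\varphi(\theta)$ once shows that the mean increment per step of $\tilde S_n$, started from the tilted stationary distribution on $E$, equals $\varphi^\prime(\theta)$; a second differentiation encodes the covariance structure via $\varphi^{\prime\prime}(\theta)$. The geometric quantities in the statement then acquire their meaning: $\varpi$ is the ``time'' at which the mean trajectory $n\,\varphi^\prime(\theta)$ lies closest to $x$ in the metric induced by $\varphi^{\prime\prime}(\theta)$, and $C$ is the associated Gaussian normalization. In particular, the weighted integral $\int e^{\langle\theta,y\rangle}\bs\pi^\Upsilon f(y)\lambda_{\R^d}(\D y)$ appearing on the right is precisely the stationary expectation of the forcing $g$.

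The last step is to estimate $\sum_n\tilde\e_i[g_{J_n}(x-\tilde S_n)]$ as $|x|\to\infty$ via a multidimensional local CLT for the tilted walk. Such an LCLT asserts that, uniformly in $x$, the conditional density of $\tilde S_n$ at $x$ behaves like a Gaussian with mean $n\,\varphi^\prime(\theta)$ and covariance $n\,\varphi^{\prime\prime}(\theta)$, multiplied by a factor converging to the tilted stationary weight on $J_n$, with an $o(n^{-d/2})$ remainder. Summing this estimate against the directly Riemann integrable $g$ then reduces to a Laplace/saddle--point evaluation centered at $n=\varpi$: the $(d-1)$ transverse Gaussian integrations produce the prefactor $(2\pi\varpi)^{-(d-1)/2}C^{-1/2}$, while the longitudinal Gaussian factor survives as the quadratic exponent in the statement. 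When $\varphi^\prime(\theta)/|\varphi^\prime(\theta)|=x/|x|$ one has $x=\varpi\varphi^\prime(\theta)$, the exponent vanishes, and \eqref{CLasHog1} follows.

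The hard part will be establishing the local CLT uniformly in $x$ under Markov modulation in dimension $d\ge 2$. This is achieved by spectral perturbation of $M(\theta+\ii\xi)$ near $\xi=0$: the perturbed Perron eigenvalue admits a quadratic expansion whose leading coefficients are precisely the moments computed above, and the rest of the spectrum remains uniformly separated from the Perron eigenvalue. The moment bound $\int(|y|^2+|\langle c,y\rangle|^\beta)e^{\langle\theta,y\rangle}\Upsilon_{ij}(\D y)<\infty$ supplies the smoothness of the Fourier transform needed to push the remainder to $o((1+|\langle c,x\rangle|)^{-(d-1)/2})$; the direct Riemann integrability of $e^{\langle\theta,y\rangle}f(y)$ justifies a Blackwell--type integration against the LCLT Gaussian; and the exclusion $\langle c,\varphi^\prime(\theta)\rangle\neq 0$ when $d=2$ rules out the marginal case in which the longitudinal integral fails to dominate. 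The bulk of this Fourier-analytic work is exactly the content of the cited H\"{o}glund estimates, which can be invoked directly once the exponential tilt has reduced the problem to a standard Markov--modulated random walk.
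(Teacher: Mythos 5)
Your proposal is correct and follows essentially the same route as the paper: the paper's proof simply states that one repeats H\"{o}glund's argument for Theorems 1.2 and 1.4 of the cited reference, carrying out all estimates entrywise in the matrix setting and replacing the classical central limit theorem at the key step (H\"{o}glund's equation (2.43)) by the Keilson--Wishart CLT for Markov-modulated random walks. Your more detailed sketch --- the Perron-eigenvector tilt, the eigenvalue-derivative identities for drift and covariance, and the spectral-perturbation local CLT --- is exactly the machinery implicit in that one-line adaptation, so the two arguments coincide in substance.
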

\begin{proof}
Basically the proof is the same as the proof of \citet[Thm. 1.2 and Thm. 1.4]{Hoglund1}.
There are some minor differences though.
In any estimate one has to take all algebraic operations entrywise for matrices and
then maximum of minimum should be applied over $E$ should be applied.
The crucial difference is in the proof of equations (2.43) where Central Limit Theorem
is applied to a random walk associated with measure $\Upsilon$.
In this place one has to use \citet[p. 552]{Keilson} instead.
\end{proof}
To identify the finite time ruin probability at the beginning we follow
\citet[Prop. 3.2]{Hoglund2}.

Let $S=(S^1,S^2)= \{(S^1_n,S^2_n), n=1,2,\ldots\}$ be a (possibly killed)
Markov modulated random walk starting from $(0,0)$ whose components $S^1$ and
$S^2$ have non-negative increments.
Let $J^\Upsilon$ be a Markov chain modulating this random walk living in the state space $E=\{1,2,\ldots,N\}$.
Consider the crossing
probabilities
\begin{eqnarray}
G_{a,b,i}(x,y,i) &=& \p(N(x)<\infty, S^1_{N(x)} > x + a, S^2_{N(x)} \le x+b|J^\Upsilon_0=i),  \label{gab}\\
K_{a,b,i}(x,y,i) &=& \p(N(x)<\infty, S^1_{N(x)} > x + a, S^2_{N(x)} \ge x+b|J^\Upsilon_0=i),\nonumber
\end{eqnarray}
where $a\ge 0, b\in\R$ and
\begin{equation}\label{Nx}
N(x) = \min\{n: S_n^1  > x\}.
\end{equation}
Let ${\bs \pi}^\Upsilon$ be the stationary distribution of environmental
Markov chain $J^\Upsilon$
and
let $$V(\zeta)=\e^\zeta_{{\mathbf \pi}}[(S^2_1\e^\zeta_{{\mathbf \pi}}[S^1_1] - S_1^1
\e^\zeta_{{\mathbf \pi}}[S^2_1])^2]/ \e^{\zeta}_{{\mathbf \pi}}[S_1^1]^3$$ for $\zeta=(\eta_1,\eta_2)$ where
$\e^\zeta_{{\mathbf \pi}}$ denotes the expectation w.r.t. $$\p^\zeta_{{\mathbf \pi}}(\cdot)=
\sum_{i,j=1}^N \pi^\Upsilon_i\e[e^{<S_1, \zeta>}, S_1\in \cdot, J_1^\Upsilon=j|J^\Upsilon_0=i].$$
For our purposes it will suffice to consider
random walks that satisfy the following
assumption (the analogue of the non-lattice
assumption in one dimension):
\begin{equation*}
\tag*{(G)}
\text{The additive group spanned by the support of $F^\zeta$ contains $\R^2_+$.}
\end{equation*}
Now observe that $G_{a,b,i}(x,y)$ and $K_{a,b,i}(x,y)$ are of the form
$(\sum_{n=0}^\infty \Upsilon^{*n}*f(x)\1)_i$
with $d=2$,
\[\Upsilon(\D x, \D y)=\p(S_1^1\in \D x, S^2_1\in \D y)\]
and $f(x,y)=\Upsilon((x+a,+\infty), (-\infty, x+b])\1$
or $f(x,y)=\Upsilon((x+a,+\infty), [x+b, +\infty))\1$, respectively.
For $\theta=(\theta_1, \theta_2)\in \Theta^\circ$ let $\varphi(\theta)$ be a Perron-Frobenius eigenvalue of the matrix
$(\int_{\R^2} e^{\theta_1 x+\theta_2 y}\Upsilon_{ij}(\D x, \D y))_{\{i,j\in E\}}$.
One can easily check that all moments and direct Riemann integrability conditions
of Theorem \ref{Hoglund} are satisfied.
We will also write $f\asymp g$ if $\lim_{x,y\to\infty, x=vy+{\rm o}(y^{1/2})} f(x,y)/g(x,y) =
1$.
Applying \eqref{CLasHog1} produces the following key proposition (see Theorem 2.1 of \citet{Hoglund2}
and Cor. 2.7, p. 313 of \citet{APQ}).
\begin{prop}\label{hog}
Assume that $(G)$ holds, and that there exists a
$\zeta=(\eta_1, \eta_2)$ with $\varphi(\zeta)=1$ such that $v =
\e^\zeta_{{\mathbf \pi}}[S_1^1]/\e^\zeta_{{\mathbf \pi}}[S_1^2]$, where $\varphi$ is finite in a
neighbourhood of $\zeta$ and $(0,\eta_2)$. If $x,y$ tend to infinity
such that $x=vy+{\rm o}(y^{1/2})>0$ then it holds that
\begin{eqnarray*}
G_{a,b,i}(x,y) &\asymp & D(a,b) x^{-1/2}\te{x\eta_1+y\eta_2}
\quad \text{if\, $\eta_2>0$},\\
K_{a,b,i}(x,y) &\asymp & D(a,b) x^{-1/2}\te{x\eta_1+y\eta_2} \quad \text{if\,
$\eta_2<0$},
\end{eqnarray*}
for $a\ge 0, b\in\R$, where $D(a,b) = C(a,b) \cdot (2\pi
V(\zeta))^{-1/2} $, with $V(\zeta)>0$ and
$$
C(a,b) = \frac{1}{|\eta_2| \e^\zeta_{{\mathbf \pi}}[S_1^1]} e^{b\eta_2}\int_a^\infty
\p^\zeta_{{\mathbf \pi}}(S_1^1 \ge x)e^{\eta_1 x}\D x.
$$
\end{prop}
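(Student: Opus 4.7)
The plan is to recognize $G_{a,b,i}(x,y)$ and $K_{a,b,i}(x,y)$ as entries of a two-dimensional Markov renewal function of the form $(U*f(\cdot)\1)_i$ and to apply Theorem \ref{Hoglund} with $d=2$, taking $\theta=\zeta$ (up to the standard sign convention that matches the exponential prefactor $\te{-\langle\theta,(x,y)\rangle}$ in Theorem \ref{Hoglund} with $\te{\eta_1 x+\eta_2 y}$ in the proposition). The Cram\'er condition $\varphi(\zeta)=1$ is exactly \eqref{CLequationHog}; the directional hypothesis $v=\e^\zeta_{{\mathbf \pi}}[S^1_1]/\e^\zeta_{{\mathbf \pi}}[S^2_1]$ says that $(x,y)$ is asymptotically parallel to $\varphi'(\zeta)=\bigl(\e^\zeta_{{\mathbf \pi}}[S^1_1],\e^\zeta_{{\mathbf \pi}}[S^2_1]\bigr)$; and the polynomial prefactor $x^{-1/2}$ is the specialization $(d-1)/2=1/2$ of the one in \eqref{CLasHog1}.

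To obtain the renewal representation I would condition on the last step prior to the exit epoch $N(x)$. Summing over $n$ and over the pre-exit state $k$ and position $\bs s=(s_1,s_2)$, one gets
\begin{align*}
G_{a,b,i}(x,y)&=\sum_{n=0}^\infty\sum_{k\in E}\int\p(S_n\in\D\bs s,\,J^\Upsilon_n=k,\,N(x)>n\mid J^\Upsilon_0=i)\\
&\qquad\times\p(S^1_1>x+a-s_1,\,S^2_1\le y+b-s_2\mid J^\Upsilon_0=k),
\end{align*}
which is $(U*f(x,y)\1)_i$ for the matrix measure $\Upsilon_{kj}(\D\bs s)=\p(S_1\in\D\bs s,\,J^\Upsilon_1=j\mid J^\Upsilon_0=k)$ and the vector source $f_k(x,y)=\p(S^1_1>x+a,\,S^2_1\le y+b\mid J^\Upsilon_0=k)$; the corresponding identity for $K_{a,b,i}$ only flips the second constraint to $S^2_1\ge y+b$. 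Assumption (G) provides the two-dimensional spread of $\Upsilon$; finiteness of $\varphi$ near $\zeta$ and $(0,\eta_2)$ delivers the exponential moments of $\Upsilon_{kj}$ needed in Theorem \ref{Hoglund}; and the sign condition on $\eta_2$ (positive for $G$, negative for $K$) is exactly what makes $\int_{\R^2}e^{\langle\zeta,\bs y\rangle}|f(\bs y)|\,\D\bs y$ finite on the relevant half-plane. Direct Riemann integrability of the integrand is immediate because $f_k$ is monotone in each coordinate.

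The core of the argument is the identification of constants in dimension $d=2$. The quadratic Gaussian factor in Theorem \ref{Hoglund} collapses to $1$ because the observation point lies along $x=vy+\mathrm{o}(y^{1/2})$, which is the direction of $\varphi'(\zeta)$. A direct computation identifies $\varphi'(\zeta)=\bigl(\e^\zeta_{{\mathbf \pi}}[S^1_1],\e^\zeta_{{\mathbf \pi}}[S^2_1]\bigr)$ and $\varphi''(\zeta)$ as the $\p^\zeta_{{\mathbf \pi}}$-covariance matrix of $S_1$; a short linear-algebra manipulation then reduces $(2\pi\varpi)^{-1/2}C^{-1/2}$ to $(2\pi V(\zeta))^{-1/2}$, with $V(\zeta)$ recognizable as the Keilson--Wishart Markov-additive CLT variance of the projection of $S_1$ onto the direction orthogonal to the drift, rescaled by $\e^\zeta_{{\mathbf \pi}}[S^1_1]^3$; this is the same variance that already appears implicitly in the proof of Theorem \ref{thm:seger}. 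The source integral $\int_{\R^2}\te{\eta_1 x+\eta_2 y}{\mathbf \pi}^\Upsilon f(x,y)\,\D x\D y$ is then evaluated by Fubini, peeling off $|\eta_2|^{-1}\te{b\eta_2}$ from the $y$-integration and recovering the stated $C(a,b)$.

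The main obstacle is this last constant-identification step: the matrix nature of $\Upsilon$ forces every quantity in Theorem \ref{Hoglund} to be projected through the Perron--Frobenius left eigenvector ${\mathbf \pi}^\Upsilon$, and the two sign cases of $\eta_2$ must be handled separately because they correspond to integrating $f$ over complementary half-planes of $\R^2$. Once this bookkeeping is carried out and \eqref{CLasHog1} is invoked, the conclusions $G_{a,b,i}\asymp D(a,b)x^{-1/2}\te{x\eta_1+y\eta_2}$ for $\eta_2>0$ and $K_{a,b,i}\asymp D(a,b)x^{-1/2}\te{x\eta_1+y\eta_2}$ for $\eta_2<0$ follow.
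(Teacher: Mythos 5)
Your proposal is correct and follows essentially the same route as the paper: the paper likewise identifies $G_{a,b,i}$ and $K_{a,b,i}$ as renewal functions $(\sum_n\Upsilon^{*n}*f\,\1)_i$ with $d=2$ and then invokes Theorem \ref{Hoglund} (via \eqref{CLasHog1}), citing H\"oglund's Theorem 2.1 and Asmussen's Cor.~2.7 for the constant identification. In fact you supply more detail than the paper does on the last-ladder-epoch decomposition and on reducing $(2\pi\varpi)^{-1/2}C^{-1/2}$ to $(2\pi V(\zeta))^{-1/2}$, which the paper leaves as ``one can easily check.''
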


In the last step we follow \citet{PalmowskiPistoriusexplev}
and prove the following main result satisfied for general MAP $\hat{X}_t=-X_t$.
The result below concerns the asymptotics of the finite time ruin
probability
$$\phi_i(x)=\p(\hat{\tau}_x^+\le t)$$
when $x,t$ jointly tend to infinity
in fixed proportion. For a given proportion $v$ the rate of decay
is either equal to $\gamma v t$ or to $k^*(v)t$, where $k^*$
is the convex conjugate of the Perron-Frobenius eigenvalue $\hat{k}$ of the
matrix exponent $\hat{F}$ of the dual MAP $\hat{X}=-X$:
$$\hat{k}^*(x) = \sup_{\alpha\in\R}(\alpha x - \hat{k}(\alpha)).
$$
We restrict ourselves to the risk process in the regime switching environment satisfying the
following condition
\begin{equation}
\tag*{(H)} \text{
$\nu^{(ij)}$ defined in \eqref{nu} are non-lattice for all $i,j\in E$.}
\end{equation}
Recall that a measure is called non-lattice if its support is not
contained in a set of the form $\{a + bh, h\in\mathbb Z\}$, for
some $a,b>0$.

To present the main result we need few additional notations.

Let $\underline{L}_t$ be a local time of $X_t-\inf_{s\le t} X_s$ which in our case increases only
at epochs $\tau_k$ defined via $\tau_0=0$, $\tau_k=\inf\{t>\tau_{k-1}: X_t-\inf_{s\le t} X_s=0\}$.
Let $\underline{L}^{-1}_t$ be the inverse local time and as usual let $\underline{\bs L}^{-1}_t$ denote the vector of total times spent in different phases up to
$\underline{L}^{-1}_t$.
Observe that the ladder process $((\underline{H}_t,\underline{\bs L}^{-1}_t),J_{\underline{L}^{-1}_t})$
for $\underline{H}_t=X_{\underline{L}^{-1}_t}$
is a bivariate Markov additive process.
Thus for  $\a\ge 0$ and $\bb=(\beta_1, \ldots, \beta_N)$ with $\b_i\ge 0$ there exists matrix valued function $\underline K(\bb,\a)$ such that
\begin{equation}\label{K}
\e[\ee^{\a \underline{H}_t-\langle\bb,\underline{\bs L}^{-1}_t\rangle};J_{ \underline{L}^{-1}_t}]=\ee^{\underline K(\bb, \a)t},\end{equation}
because $\underline{L}^{-1}_0=0$.
Let  \[((\overline{\hat{H}}_t,{\overline{\hat{\bs L}}}^{-1}_t),\overline{J}_t)=
((\underline{H}_t,{\underline{{\bs L}}}^{-1}_t),\underline{J}_t)\]
with
\[\underline{J}_t=J_{{\overline{\hat{L}}}^{-1}_t}\]
be a ladder process
related with supremum of the dual risk process $\hat{X}=-X$ and $\overline{\hat{K}}(\alpha, \bb)$ is its matrix Laplace exponent.
Note that $((\underline{H}_t,{\underline{{\bs L}}}^{-1}_t),\underline{J}_t)$ is the bivariate MAP.
We denote by $\overline{{\bs \pi}}$ the stationary distribution of $\underline{J}_t$.
The assumption (H) and Vigon's formula (see \citet[Thm. 6.22 and Sec. 6..6.2]{kyprianou}) implies that
the jump measures associated to $\overline{\hat{H}}$ is non-lattice as well.
To simplify notation we will write in this section
\[(\underline{H}_t,{\underline{{\bs L}}}^{-1}_t\1)=(H_t, L_t^{-1}).\]
Let $$\Theta_{<\infty}=\{\theta\in\R: \hat{k}(\theta)=k(-\theta)<\infty\}$$
and $\Theta^\circ_{<\infty}$ be its interior.

\begin{theorem}\label{thm}
Assume that $(H)$ holds. Suppose that $0<\hat{k}^\prime(\gamma)=k^\prime(-\gamma)<\infty$ and
that there exists a $\Gamma(v)\in\Theta^\circ_{<\infty}$ such that
$\hat{k}'(\Gamma(v))=v$. If $x$ and $t$ tend to infinity such that $x=
vt+{\rm o}(t^{1/2})$ then
$$
\phi_i(x)=\p_i(\hat{\tau}_x^+\le t) \asymp \begin{cases} C_i e^{-\gamma x},
& \text{if $0< v < \hat{k}'(\gamma)$,}\\
D_v t^{-1/2}e^{-\hat{k}^*(v)t}, & \text{if $ v > \hat{k}'(\gamma)$,}
\end{cases}
$$
with $C_i$ given in Theorem \ref{Crameasymptotics} and
\begin{align*}
D_v &= \frac{1}%
{\eta_v \e_{\overline{{\bs \pi}}}[e^{\Gamma(v) H_1-\eta_v L_1^{-1}}H_1\ind{L_1^{-1}<\infty}]} \times \frac{v}{\Gamma(v)
\sqrt{2\pi\hat{k}''(\Gamma(v))}}\\&\qquad\times \overline{{\bs \pi}}\kappa(\hat{k}(\Gamma(v)),0)(q\matI+\kappa(\hat{k}(\Gamma(v)),0))^{-1}\1,
\end{align*}
where $\eta_v = \hat{k}(\Gamma(v))$.
\end{theorem}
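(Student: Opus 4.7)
My plan is to split at the critical slope $v^*=\hat k'(\gamma)$, with a ladder-process reformulation common to both cases. Since $\hat X=-X$ jumps only upwards and has no creeping across level $x$,
$$\phi_i(x,t)=\p_i\!\left(N(x)<\infty,\ H_{N(x)}>x,\ L^{-1}_{N(x)}\le t\right),$$
where $N(x)=\inf\{n\ge 1:H_n>x\}$ is the first step above $x$ of the discrete bivariate Markov additive chain $(H_n,L^{-1}_n)$ at ladder jumps. Assumption (H) transfers, via Vigon's \emph{\'equation amicale}, to non-latticeness of the jump measure of $(H,L^{-1})$, so condition (G) of Proposition \ref{hog} is in force.

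For the slow regime $v<v^*$ I recycle the change of measure from Theorem \ref{Crameasymptotics} to write
$$\phi_i(x,t)=e^{-\gamma x}\,\tilde\e_{0,i}\!\left[e^{\gamma(\hat X_{\hat\tau_x^+}-x)}\frac{h_i(\gamma)}{h_{J_{\hat\tau_x^+}}(\gamma)};\ \hat\tau_x^+\le t\right].$$
Under $\tilde\p$, $\hat X$ has asymptotic drift $m=\hat k'(\gamma)=v^*$, so the Markov-renewal strong law together with Anscombe's theorem yield $\hat\tau_x^+/x\to 1/m$, $\tilde\p$-a.s. Since $t/x\to 1/v>1/m$, the indicator $\ind{\hat\tau_x^+\le t}\to 1$, and the weak convergence of the overshoot to $\xi(\infty)$ combined with bounded convergence transports the expectation to the same constant $C_i$ produced in Theorem \ref{Crameasymptotics}.

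For the fast regime $v>v^*$ I apply Proposition \ref{hog} to $(S^1,S^2)=(H,L^{-1})$ with $a=0$ and $b=t-x$, so that $G_{0,b,i}(x,t)=\phi_i(x,t)$. I choose the Cram\'er tilt $\zeta=(-\Gamma(v),\eta_v)$ with $\eta_v=\hat k(\Gamma(v))>0$; positivity follows from $\Gamma(v)>\gamma$ and the convexity of $\hat k$. The identity $\varphi(\zeta)=1$ is the Wiener--Hopf statement that the $\Gamma(v)$-exponential martingale of $\hat X$ makes the ladder chain, killed at rate $\eta_v$ in the $L^{-1}$-component, honest. Implicit differentiation of $\varphi(-\eta,\hat k(\eta))=1$ at $\eta=\Gamma(v)$ yields $\e^\zeta_{\bs\pi}[H_1]/\e^\zeta_{\bs\pi}[L^{-1}_1]=\hat k'(\Gamma(v))=v$, which is the slope condition required by Proposition \ref{hog}. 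Rewriting the exponent as $-\Gamma(v)x+\eta_v t=-\hat k^*(v)t$ (via $x=vt$) and absorbing the $x^{-1/2}\sim(vt)^{-1/2}$ prefactor into $t^{-1/2}$ delivers the advertised decay.

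The main obstacle is the explicit identification of $D_v$: Proposition \ref{hog} produces $D(0,t-x)=C(0,t-x)(2\pi V(\zeta))^{-1/2}$, and I must show that this collapses to the compact form stated in the theorem. Three pieces have to be matched. First, the factor $1/[\eta_v\,\e_{\overline{\bs\pi}}(e^{\Gamma(v)H_1-\eta_v L^{-1}_1}H_1;L^{-1}_1<\infty)]$ comes from $1/(\eta_v\e^\zeta_{\bs\pi}[H_1])$ in $C(0,b)$ once the tilted measure $\p^\zeta_{\bs\pi}$ is identified. Second, the central factor $v/(\Gamma(v)\sqrt{2\pi\hat k''(\Gamma(v))})$ arises by combining the $\sqrt{v}$ absorbed in the $x^{-1/2}\to t^{-1/2}$ rescaling, the $1/\Gamma(v)$ from the overshoot integral $\int_0^\infty \p^\zeta_{\bs\pi}(H_1\ge h)\,e^{-\Gamma(v)h}\,\D h$ appearing in $C(0,b)$, and the CLT variance $V(\zeta)$ computed via implicit differentiation of $\varphi(-\eta,\hat k(\eta))=1$, which produces the curvature $\hat k''(\Gamma(v))$. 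Third, the vector $\overline{\bs\pi}\kappa(\eta_v,0)(q\matI+\kappa(\eta_v,0))^{-1}\1$ encodes the phase distribution just before the ladder step that crosses level $x$ and comes from reading off the left and right Perron eigenvectors of the tilted ladder kernel. I expect this last matrix-to-scalar reduction, with its Perron-normalisation bookkeeping in the Markov-modulated setting, to be the most delicate step.
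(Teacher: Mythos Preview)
Your route differs from the paper's in the fast regime $v>\hat k'(\gamma)$, and that difference matters for the constant.

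For $v<\hat k'(\gamma)$ your argument is a close variant: rather than using Anscombe and the overshoot limit inside the tilted expectation, the paper decomposes $e^{\gamma x}\p_i(\hat\tau_x^+\le t)=e^{\gamma x}\p_i(\hat\tau_x^+<\infty)-e^{\gamma x}\p_i(t<\hat\tau_x^+<\infty)$, reads off $C_i$ from the Cram\'er asymptotic, and kills the second term by bounding it above by $\tilde\p_i(\hat X_t\le x)$, which tends to $0$ by the LLN since $\tilde\e[\hat X_t]=t\hat k'(\gamma)>vt$. Either argument works.

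For $v>\hat k'(\gamma)$ the paper does \emph{not} apply Proposition~\ref{hog} to the ladder chain at its own epochs. It first \emph{Poisson-samples} the continuous-time ladder subordinator at an auxiliary rate $q>0$, obtaining a bivariate MMRW $(H_{\sigma_n},L^{-1}_{\sigma_n})$ whose step transform is $q(q\matI-\kappa(v,u))^{-1}$ by \eqref{phikappalink}. A sandwich bound (Lemma~\ref{lem:est}),
\[G^{(q)}(x,t)\ \le\ \p_i(\hat\tau_x^+\le t)\ \le\ G^{(q)}(x,t+M)/\min_j h_j(0-,M),\]
reduces the problem to the asymptotics of $G^{(q)}$; Proposition~\ref{hog} is applied to the \emph{sampled} walk (Lemma~\ref{hog2}), producing constants $D_{q,M}$; and only then does one send $q\to\infty$, $M\downarrow 0$.

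This is exactly where the matrix factor $\overline{\bs\pi}\,\kappa(\eta_v,0)(q\matI+\kappa(\eta_v,0))^{-1}\1$ in the stated $D_v$ originates: the $q$ is the Poisson sampling rate, and the factor comes from evaluating $1-\e_{\overline{\bs\pi}}[e^{-\eta_v L^{-1}_{\sigma_1}};L^{-1}_{\sigma_1}<\infty]$ via \eqref{phikappalink}. It is not a Perron-eigenvector normalisation of the unsampled ladder kernel, and a direct application of Proposition~\ref{hog} to $(H_n,L^{-1}_n)$ cannot reproduce the constant in the stated form. Your sketch of the ``third piece'' of $D_v$ therefore does not match, and this is the part you yourself flag as most delicate. (A smaller slip: $a,b$ in Proposition~\ref{hog} must be fixed; taking $b=t-x$ is not admissible. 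The paper uses $G_{0,0,i}(x,t)$ and lets $x,t\to\infty$ along $x=vt+o(t^{1/2})$.)
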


\begin{remark}\label{uwaga1}\rm
In the case of risk process \eqref{riskMM3} with $N=1$
(there is no Marov modulation)
we can find that
$$
D_v = \frac{\Gamma(v) + \tilde\Gamma(v)}{\Gamma(v)\tilde\Gamma(v)}
\frac{1}{\sqrt{2\pi\hat{k}''(\Gamma(v))}}, \quad C_1 =
\frac{\hat{k}'(0)}{\hat{k}'(\gamma)},
$$
where $\tilde\Gamma(v) = \sup\{\theta:\hat{k}(-\theta) = \hat{k}(\G(v))\}$,
recovering formulas that can be found in \citet{Arfwedson} and
\citet{feller} respectively, for the case of a classical risk
process.
\end{remark}

\begin{remark}\rm
Note that asymptotics given in Theorem \ref{thm} works for different range
of time horizons than Segerdahl's approximation identified in Theorem
\ref{thm:seger}.
\end{remark}

\begin{proof}
At the beginning we will use the exponential change of measure \eqref{expomeasureruin} defining $\tilde{\p}_i$.
In the case $0<v<\hat{k}'(\gamma)$, the asymptotics in Theorem
\ref{thm} are a consequence of the law of large numbers. To see
why this is the case, note that $e^{\gamma x}\p_i(\hat{\tau}_x^+\le t) =
e^{\gamma x}\p_i(\hat{\tau}_x^+ < \infty) - e^{\gamma x}\p_i(t<\hat{\tau}_x^+ < \infty)$,
where the first term tends to $C_i$ in view of Theorem \ref{Crameasymptotics},
while for the second term the Markov property
imply that
\begin{eqnarray*}
\lefteqn{e^{\gamma x}\p_i(t<\hat{\tau}_x^+ < \infty)}\\&&\le\sum_{j\in E}\int_{-\infty}^x
\p_i(\hat{\tau}_x^+>t, \hat{X}_t\in \D y)e^{\gamma y}e^{\gamma (x-y)}\p_j(\hat{\tau}_{x-y}^+<\infty)\\
&&\le \int_{-\infty}^x \p_i(\hat{X}_t\in \D y)e^{\gamma
y}=\tilde{\p}_i(\hat{X}_t\le x),
\end{eqnarray*}
which tends to $0$ as $t$ tends to infinity in view of the law of
large numbers since $\tilde{\e}[\hat{X}_t]=t\hat{k}'(\gamma)>x=vt+{\rm o}(t^{1/2})$.
We will now consider the case of $v>\hat{k}'(\gamma)$.

We start from construction of the embedded Markov modulated random walk
at Poisson epochs.
Denote by $e_1, e_2, \ldots$ a sequence of independent identically exponentially distributed with
parameter $q$ random variables and by $\s_n=\sum_{i=1}^n e_i$, with
$\s_0=0$, the corresponding partial sums. Consider the
two-dimensional (killed) Markov modulated random walk $\{(S_n^1,S_n^2), n=1,2\ldots\}$
starting from $(0,0)$ with step-sizes distributed according to
$$
\Upsilon^{(q)}(\D x, \D t) =
\p(S_1^1\in \D x, S_1^2\in \D t)= \p(
H_{\s_{1}}\in \D x, L^{-1}_{\s_{1}}\in\D t),
$$
and write $G^{(q)}$ for the corresponding crossing probability
$$
G^{(q)}(x,t) = G_{0,0,i}(x,t) = \p_i(N(x)<\infty, S^2_{N(x)}
\le t)
$$
for $N(x)$ defined in \eqref{Nx} and $G_{a,b,i}(x,y)$ defined in \eqref{gab}.
Note that
\begin{equation}\label{phikappalink}
\iint e^{-ux-vt} \p(S_1^1\in \D x, S_1^2\in \D t)  = q(q\matI
- \kappa(v,u))^{-1},
\end{equation}
where
\begin{equation}\label{kappa}
\kappa(v,u)=\overline{\hat{K}}(v\1^T, u),
\end{equation}
and $\overline{\hat{K}}$ is a matrix Laplace exponent of
a ladder process related with supremum of the dual risk
$\hat{X}=-X$ defined formally below \eqref{K}.

The key step in the proof is to derive bounds for $\p(\hat{\tau}_x^+\le
t)$ in terms of crossing probabilities involving the random walk
$(S^1,S^2)$.
\begin{lemma}\label{lem:est} Let $M,q>0$. For $x,t>0$ it holds that
\begin{equation}\label{star}
G^{(q)}(x,t) \le \p_i(\hat{\tau}_x^+\le t) \le G^{(q)}(x,t+M)/\min_{i\in E} h_i(0-, M),
\end{equation}
where $h(0-,M) = \lim_{x\uparrow 0} h(x,M)$, with $h(x,t) =
\p(H_{\s_1} > x, L^{-1}_{\s_1} \le t)\1$.
\end{lemma}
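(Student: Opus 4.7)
The plan is to represent $\hat{\tau}_x^+$ through the first passage time of the ladder height process $H$, and then to control the ``rounding error'' incurred by sampling that process along the independent Poisson clock $(\sigma_n)$. First set $T_x:=\inf\{s\ge 0:H_s>x\}$, a stopping time of the bivariate ladder MAP $((H,L^{-1}),\underline{J})$ that satisfies $\hat{\tau}_x^+=L^{-1}_{T_x}$. The epochs $\sigma_n$ form a rate-$q$ Poisson clock on the local-time axis, independent of the ladder MAP by construction, and $N(x)$ is the first index at which $H_{\sigma_n}>x$. Because $H$ is non-decreasing, $\sigma_{N(x)}\ge T_x$ on $\{N(x)<\infty\}$.

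The lower bound then comes from monotonicity of $L^{-1}$ as well: on $\{N(x)<\infty\}$,
\[\hat{\tau}_x^+=L^{-1}_{T_x}\le L^{-1}_{\sigma_{N(x)}}=S^2_{N(x)},\]
so $\{N(x)<\infty,\,S^2_{N(x)}\le t\}\subseteq\{\hat{\tau}_x^+\le t\}$, which immediately yields $G^{(q)}(x,t)\le\p_i(\hat{\tau}_x^+\le t)$.

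For the upper bound I will apply the strong Markov property of the ladder MAP at $T_x$ together with independence of the Poisson clock. Conditionally on $\mathcal{F}_{T_x}$ with $\underline{J}_{T_x}=j$, the pair $(\sigma_{N(x)}-T_x,\,L^{-1}_{\sigma_{N(x)}}-L^{-1}_{T_x})$ has the same joint distribution as $(\sigma_1,\,L^{-1}_{\sigma_1})$ under $\p_j$; in particular,
\[\p_i\bigl(L^{-1}_{\sigma_{N(x)}}-L^{-1}_{T_x}\le M\bigm|\mathcal{F}_{T_x}\bigr)=h_{\underline{J}_{T_x}}(0-,M)\ge\min_{j\in E}h_j(0-,M).\]
Since $\{\hat{\tau}_x^+\le t\}=\{L^{-1}_{T_x}\le t\}$ is $\mathcal{F}_{T_x}$-measurable, multiplying by its indicator and taking expectations gives
\[\min_{j\in E}h_j(0-,M)\cdot\p_i(\hat{\tau}_x^+\le t)\le\p_i\bigl(\hat{\tau}_x^+\le t,\;L^{-1}_{\sigma_{N(x)}}\le t+M\bigr)\le G^{(q)}(x,t+M),\]
the right-hand inequality following from $\{\hat{\tau}_x^+\le t,\,L^{-1}_{\sigma_{N(x)}}\le t+M\}\subseteq\{N(x)<\infty,\,S^2_{N(x)}\le t+M\}$ (on this event $N(x)$ is finite and $H_{\sigma_{N(x)}}>x$ by definition of $N$).

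The only delicate point I foresee is the strong Markov step at $T_x$: one has to verify that the Poisson clock restarts as an independent rate-$q$ clock past $T_x$ while the ladder MAP restarts from state $\underline{J}_{T_x}$. This is standard because $T_x$ is a stopping time measurable with respect to the ladder MAP alone and the clock is independent of the ladder MAP by construction; once that is in place the remainder of the argument is simple monotonicity and set containment.
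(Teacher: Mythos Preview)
Your proof is correct and follows essentially the same approach as the paper: both identify $\hat{\tau}_x^+=L^{-1}_{T_x}$, obtain the lower bound from $T_x\le\sigma_{N(x)}$ and monotonicity of $L^{-1}$, and obtain the upper bound by applying the strong Markov property of the ladder MAP at $T_x$ together with the memorylessness of the exponential clock $\sigma_1$. The only cosmetic difference is that the paper first rewrites both $\p_i(\hat{\tau}_x^+\le t)$ and $G^{(q)}(x,t)$ as renewal convolutions $U\star f$ and $U\star h$ and then compares $f$ with $h$, whereas you work directly with set containments; the probabilistic content is identical.
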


\begin{proof}
Let $T(x) = \inf\{t\ge0: H_t > x\}$ and note that
$\hat{\tau}_x^+ = L^{-1}_{T(x)}$. By applying the Markov property it
follows that
\begin{eqnarray}
 \p_i(\hat{\tau}_x^+\le t)
 &=& \p_i(T(x) < \infty, L^{-1}_{T(x)} \le t)\\
  &=& \sum_{n=1}^\infty \p(\s_{n-1} \le T(x) < \sigma_n, L^{-1}_{T(x)} \le
  t)\nonumber\\
  &=& \sum_{n=1}^\infty \p(H_{\s_{n-1}}\le x, H_{\s_n} > x,
L^{-1}_{T(x)} \le
  t)\nonumber\\
  &=& \sum_{n=1}^\infty\int  \p_i(H_{\s_{n-1}}\in \D y,
  L^{-1}_{\s_{n-1}}\in\D s) \\&& \quad \quad \times\ \p(H_{\s_1} > x - y, L^{-1}_{T(x-y)} \le
  t-s)\nonumber\\
  &=& \left(\sum_{n=0}^\infty (\Upsilon^{(q)})^{\star n} \star f(x,t)\right)_i = ((U \star
  f)(x,t))_i,\label{eq:u*f}
\end{eqnarray}
where $U=\sum_{n=0}^\infty (\Upsilon^{(q)})^{\star n}$, $f(x,t) = \p(H_{\s_1}
> x, L^{-1}_{T(x)} \le t)\1$ and $\star$ denotes convolution.
Following a similar reasoning it can be
checked that \begin{equation}\label{eq:u*h}G^{(q)}(x,t) = (U\star
h(x,t)\1)_i.\end{equation} In view of \eqref{eq:u*f} and
\eqref{eq:u*h}, the lower bound in \eqref{star} follows since $$f(x,t) \ge h(x,t),$$ taking note of the fact that $H_{\s_1}> x$
precisely if $T(x) < \s_1$,
while the upper bound in \eqref{star} follows by observing that
for fixed $M>0$,
\begin{eqnarray*}
\nonumber  h(x,t+M) &\ge& \p(H_{\s_1} > x, L^{-1}_{T(x)}
\le t, L^{-1}_{\s_1} - L^{-1}_{T(x)} \le M)\1\\
\nonumber &=& \p(H_{\s_1} > x, L^{-1}_{T(x)} \le t) \p(L^{-1}_{\s_1} \le
M)\1\\ &\ge& f(x,t)\min_{i\in E} h_i(0-,M),
\end{eqnarray*}
where we used the strong Markov property of $L^{-1}$ and the lack
of memory property of $\s_1$. \end{proof}

\medskip

\noindent
Applying H\"oglund's asymptotics in Proposition \ref{hog} yields
the following result.

\begin{lemma}\label{hog2}
Let the assumptions of Proposition \ref{hog} hold true. If
$x,t\to\infty$ such that for $v > \hat{k}'(\gamma)$ we have $x=vt+{\rm o}(t^{1/2})$ then
$$
G^{(q)}(x,t+M) \sim D_{q,M}t^{-1/2}e^{-\hat{k}^*(v)t}, \quad M\ge 0,
$$
where $D_{q,M} = \frac{v}{\sqrt{2\pi\hat{k}''(\Gamma(v))}} C_{q,M}$ with
$$
C_{q,M} =
\frac{q e^{\hat{k}(\Gamma(v))M}}{c_v\hat{k}(\Gamma(v))\Gamma(v)}\overline{{\bs \pi}}\kappa(\hat{k}(\Gamma(v)),0)(q\matI+\kappa(\hat{k}(\Gamma(v)),0))^{-1}\1,
$$
where $c_v=\e_{\overline{{\bs \pi}}}[e^{\Gamma(v) H_{1} - \hat{k}(\G(v))L^{-1}_{1}}H_{1} \ind{L^{-1}_{1}<\infty}]$
and $\kappa$ is defined in \eqref{kappa}.
\end{lemma}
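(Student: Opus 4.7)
The plan is to apply Proposition~\ref{hog} to $G^{(q)}(x,t+M)$, which coincides with $G_{0,M,i}(x,t)$ in the notation of~\eqref{gab}. The Cram\'er parameter $\zeta=(\eta_1,\eta_2)$ is chosen so that simultaneously $\varphi(\zeta)=1$ and the slope condition $\e^\zeta_{\overline{\bs\pi}}[S_1^1]/\e^\zeta_{\overline{\bs\pi}}[S_1^2]=v$ holds; via \eqref{phikappalink} and the matrix Wiener--Hopf factorisation linking $\hat F-q\matI$ with $\kappa$, these two conditions are solved by a unique pair involving $\Gamma(v)$ and $\eta_v=\hat k(\Gamma(v))$, the slope equation being a direct consequence of implicit differentiation of $\varphi(\zeta)=1$ together with $\hat k'(\Gamma(v))=v$. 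The finite exponential moment hypotheses of Proposition~\ref{hog} are ensured by $\Gamma(v)\in\Theta^\circ_{<\infty}$, while the non-lattice condition~(G) for the bivariate random walk $(S^1,S^2)$ is inherited from~(H) through Vigon's formula applied to the jumps of the upward ladder of $\hat X$ and the independent exponential law of $\sigma_1$.

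With these hypotheses verified, and noting that the regime $v>\hat k'(\gamma)$ combined with convexity of $\hat k$ yields $\Gamma(v)>\gamma$ and hence $\eta_v>0$ (placing us in the $\eta_2>0$ branch of Proposition~\ref{hog}), we obtain
\[
G^{(q)}(x,t+M)=G_{0,M,i}(x,t)\asymp D(0,M)\,x^{-1/2}e^{x\eta_1+t\eta_2}.
\]
Substituting $x=vt+{\rm o}(t^{1/2})$ and using the Legendre duality $\hat k^*(v)=\Gamma(v)v-\hat k(\Gamma(v))$, the exponent simplifies to $-\hat k^*(v)\,t+{\rm o}(1)$, while $x^{-1/2}\sim v^{-1/2}t^{-1/2}$. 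The leading asymptotic rate is thus $t^{-1/2}e^{-\hat k^*(v)t}$, with prefactor $v^{-1/2}D(0,M)$; what remains is to identify $v^{-1/2}D(0,M)$ with $D_{q,M}$.

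Writing $D(0,M)=C(0,M)(2\pi V(\zeta))^{-1/2}$, this identification splits into two pieces. The integral in $C(0,M)$ is evaluated by matrix integration by parts together with the tilted form of \eqref{phikappalink}; a resolvent manipulation then produces the block $\overline{\bs\pi}\kappa(\eta_v,0)(q\matI+\kappa(\eta_v,0))^{-1}\1$, while the denominator $\e^\zeta_{\overline{\bs\pi}}[S_1^1]$ is recognised as $c_v$ by differentiating $\e_{\overline{\bs\pi}}[e^{uH_1-\eta_v L_1^{-1}};L_1^{-1}<\infty]$ in $u$ at $u=\Gamma(v)$. The main obstacle is the identification $V(\zeta)=\hat k''(\Gamma(v))/v^3$: this requires differentiating the matrix Wiener--Hopf factorisation twice at $\Gamma(v)$ and invoking Perron--Frobenius perturbation theory for the pencil $\hat F(\alpha)-\hat k(\alpha)\matI$, so that the bivariate CLT variance of the embedded ladder random walk $(S^1,S^2)$ is expressed through the second derivative of the dominant eigenvalue $\hat k$ of the matrix exponent of $\hat X$. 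Once these pieces are in hand, assembling them recovers $D_{q,M}$ in the stated form.
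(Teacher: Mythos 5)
Your proposal follows essentially the same route as the paper: apply Proposition~\ref{hog} to the embedded ladder random walk $(H_{\s_1},L^{-1}_{\s_1})$ with the tilt $\zeta=(-\Gamma(v),\eta_v)$, check $\eta_v=\hat{k}(\Gamma(v))>0$, use Legendre duality for the exponent, and evaluate $C(0,M)$ and $V(\zeta)$ via the Wiener--Hopf/eigenvalue identities. Your value $V(\zeta)=\hat{k}''(\Gamma(v))/v^{3}$ (together with the explicit conversion $x^{-1/2}\sim v^{-1/2}t^{-1/2}$) is in fact the one consistent with the stated constant $D_{q,M}$, whereas the paper records $V(\zeta)=\hat{k}''(\Gamma(v))/v$ at the corresponding step; recomputing from the definition of $V$ and identity \eqref{eq:c} confirms your version.
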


For $u>\gamma$ and $u\in\Theta^\circ_{<\infty}$ we denote by $\p^{(u)}$ the measure
\begin{equation}\label{expomeasureruinb}
\frac{\D \p^{(u)}_{|\mathcal{F}_t}}{\D\p_{|\mathcal{F}_t}}=e^{u (X_t-x)-k(u)t}
\frac{h_{J_t}(u)}{h_{J_0}(u)}.\end{equation}
Let $\e^{(u)}$ be the expectation with respect of $\p^{(u)}$.
Lemma \ref{hog2} is a consequence of the following auxiliary
identities given in \citet{PalmowskiPistoriusexplev}:
\begin{eqnarray}
\label{eq:a}\varphi(z,-u)&=&1 \quad \text{iff  $\kappa(z,-u)=0$ iff
$\hat{k}(u)=z$},
\\
\label{eq:b} \hat{k}'(u) &=&
\e^{(u)}_{\overline{{\bs \pi}}}[\hat{X}_1]=\e^{(u)}_{\overline{{\bs \pi}}}[H_{\s_1}]\cdot(\e^{(u)}_{\overline{{\bs \pi}}}[L^{-1}_{\s_1}])^{-1},\\
\nonumber  \hat{k}''(u)& =& \e^{(u)}_{\overline{{\bs \pi}}}[(H_{\s_1} -
\hat{k}'(u)L^{-1}_{\s_1})^2]\cdot(\e^{(u)}_{\overline{{\bs \pi}}}[L^{-1}_{\s_1}])^{-1},\\
\label{eq:c} &=&\hat{k}'(u)\e^{(u)}_{\overline{{\bs \pi}}}[(H_{\s_1} -
\hat{k}'(u)L^{-1}_{\s_1})^2]\cdot(\e^{(u)}_{\overline{{\bs \pi}}}[H_{\s_1}])^{-1},\\
\label{eq:d} \hat{k}^*(v) &=& v\Gamma(v) - \hat{k}(\Gamma(v))\quad
\text{for $v>0$ with $\Gamma(v)\in\Theta^\circ_{<\infty}$.}
\end{eqnarray}
In particular, the first identity follows from \eqref{phikappalink}, \eqref{kappa} and
Wiener-Hopf factorization given in Theorem 26 of \citet{dereich2015real}.

\begin{proof}[Proof of Lemma \ref{hog2}] The proof follows by an
application of Prop. \ref{hog} to $G^{(q)}(x,t+M)$ with
$$(S_1^1, S_1^2)
= (H_{\s_1},L^{-1}_{\s_1})\quad \text{ and }\quad
\zeta=(-\Gamma(v),\eta_v).$$ Note that, by \eqref{eq:a} with
$u=\Gamma(v)$, $\varphi(\zeta)=1$, and that $\eta_v = \hat{k}(\G(v))>0$
if $v > \hat{k}'(\gamma)$. For this choice of the parameters,
$\e^\zeta_{{\bs \pi}}[S_1^1]=\e^{(\Gamma(v))}_{\overline{{\bs \pi}}}[H_{\s_1}]=c_v/q$, and Equations
\eqref{eq:b},\eqref{eq:c} \eqref{eq:d} imply that  $\xi x+\eta
t=-\hat{k}^*(v)t$ and
$$V(\zeta)=
\hat{k}''(\Gamma(v))/\hat{k}'(\Gamma(v))=\hat{k}''(\Gamma(v))/v.$$ To
complete the proof we are left to verify
 the form of the constants. The calculation of the
$C_{q,M}=C(0,0)e^{\eta_v M}$ goes as follows:
\begin{eqnarray*}
C_{q,M} &=&
\frac{qe^{\hat{k}(\Gamma(v))M}}{\hat{k}(\Gamma(v))c_v}\left(\int_0^\infty
e^{-\Gamma(v)x}\e_{\overline{{\bs \pi}}}[e^{\Gamma(v) H_{\s_1} - \hat{k}(\G(v))L^{-1}_{\s_1}}
\ind{(x\le H_{\s_1} < \infty)}]\D x \right)\\
&=& \frac{qe^{\hat{k}(\Gamma(v))M}}{\hat{k}(\Gamma(v))\Gamma(v)c_v}
\left(1 - \e_{\overline{{\bs \pi}}}[e^{-\hat{k}(\Gamma(v))L^{-1}_{\s_1}}\ind{(L^{-1}_{\s_1} < \infty)}]\right)\\
&=& \frac{qe^{\hat{k}(\Gamma(v))M}}{\hat{k}(\Gamma(v))\Gamma(v)c_v}\left(1 -
q\overline{{\bs \pi}}(q\matI+\kappa(\hat{k}(\Gamma(v)),0))^{-1}\1\right).\\
&=& \frac{qe^{\hat{k}(\Gamma(v))M}}{\hat{k}(\Gamma(v))\Gamma(v)c_v}
\overline{{\bs \pi}}\kappa(\hat{k}(\Gamma(v)),0)(q\matI+\kappa(\hat{k}(\Gamma(v)),0))^{-1}\1,
\end{eqnarray*}
Combining all
results completes the proof.\end{proof}

Now we can complete the proof of Theorem \ref{thm}.
Writing $l(t,x) = t^{1/2}e^{\hat{k}^*(v)t}\p(\hat{\tau}_x^+\le t)$, Lemmas
\ref{lem:est} and \ref{hog2} imply that
\begin{eqnarray*}
s &=& \limsup_{x,t\to\infty, x=tv+{\rm o}(t^{1/2})} l(t,x) \le D_{q,M}/\min_{i\in E} h_i(0-,M),\\
i &=& \liminf_{x,t\to\infty, x=tv+{\rm o}(t^{1/2})} l(t,x) \ge D_{q,0}.
\end{eqnarray*}
By definition of $h$ and $D_{q,M}$ it directly follows that, as
$q\to\infty$,
$$D_{q,0} \to D_v,\ D_{q,M} \to D_v e^{\hat{k}(\G(v))M}\text{ and }
h(0-,M)=\p(L^{-1}_{\sigma_1}\le M)\1\to \1.$$ Letting $M\downarrow 0$
yields that $s=i = D_v$, and the proof is complete.
\end{proof}

\section{Parisian ruin}\label{sec:Par}
In this section we follow \citet{CzarnaPalmowski, Loeffenetalpar, DassWu1}
considering so-called Parisian ruin probability,
that occurs if the risk process $X$ defined in \eqref{riskMM3} stays below zero for a longer period than a fixed $\zeta>0$.
Formally, we define
Parisian time of ruin by:
$$\tau^{\zeta}=\inf\{t>0: t-\sup\{s<t: X_s \ge 0\}\ge \zeta, X_t<0 \}$$
and Parisian ruin probability is then given by:
$$\p_{x,i}(\tau^\zeta<\infty).$$
The case $\zeta=0$ corresponds to classical ruin problem and we do not deal with this case in this section.
The name for this problem is borrowed from the Parisian option. Depending on type of such option the prices
are activated or canceled if
underlying asset stays above or below barrier long enough in a row.
It is a common belief that Parisian ruin probability is a better measure of risk
than classical ruin probability in many situations
giving possibility for insurance company to get solvency.
We consider here the fixed delay $\zeta$. In other
papers the deterministic and fixed delay $\zeta$ is replaced by an
independent exponential random variable; see e.g. \citet{Land2, EriketalGerrberpar}.
In this case, as pointed in \citet{Jevsstriking, Gordonetal}, the ruin probability
is closely related with Poisson observed ruin probability.
The paper that deal with this probability in the context of MAP risk processes is \citet{Chineeseparisian}.

We give in next theorem different main representation of the Parisian survival probability though.
Recall that by
\[\tau_x^+=\inf\{t\ge 0:X_t>x\}\]
we denote the first passage time over a level $x\ge 0$
and observe that $X_{\tau_x^+}=x$ a.s., because of absence of positive jumps.
The Markov additive property applied at $\tau_x^+$ implies that $J_{\tau_x^+}$, the phase observed at the first passage times, is a Markov chain indexed by the level~$x\ge 0$.
Hence there is an identity:
\begin{align}
\label{eq:Gdef_cont}
 \e[e^{-q\tau_x^+}, J_{\tau_{x}^+}]=e^{G^{(q)} x}, \quad x\ge 0
\end{align}
for some \emph{transition rate} matrix~$G^{(q)}$ which is of size $N\times N$.
Moreover, by \citet{ivanovs_palmowski} the matrix $G^{(q)}$ is a \emph{right solution} of
$$F(-G^{(q)})=q\matI.$$
\begin{theorem}\label{ThmMainpar}
Parisian survival probability for a MAP risk process equals:
\begin{eqnarray} \label{PRE}
\lefteqn{\p_{x,i}(\tau^\zeta=+\infty)=
\p_{x,i}(\tau_0^-=+\infty)}\\\nonumber&&+
\int_0^\infty \p_{x,i}(\tau^{-}_0<\infty, -X_{\tau^{-}_0}\in \D z)\p(\tau_z^+\le \zeta)\p(\tau^\zeta=+\infty),
\end{eqnarray}
where $\p_{x,i}(\tau_0^-=+\infty)=\overline{\phi}_i(x)=(W(x)W(\infty)^{-1}\1)_i$ by \eqref{ruintimeprob},
and the vector $\p(\tau^\zeta=+\infty)=(\p_i(\tau^\zeta=+\infty))_{\{i\in E\}}$ solves the following system of equations
\begin{eqnarray} \label{PRE2}
&&\qquad\p_{i}(\tau^\zeta=+\infty)=
\p_{i}(\tau_0^-=+\infty)\\\nonumber&&+
\sum_{j,k\in E}\int_0^\infty \p_{i}(\tau^{-}_0<\infty, -X_{\tau^{-}_0}\in \D z, J_{\tau_0^-}=k)\p_k(\tau_z^+\le \zeta, J_{\tau_z^+}=j)\p_j(\tau^\zeta=+\infty).
\end{eqnarray}
Moreover,
\begin{eqnarray}\label{ltxuj}
\lefteqn{\int_0^{\infty}e^{-\theta s}\,\D s\int_0^\infty
\p_{x,i}(\tau^{-}_0<\infty, -X_{\tau^{-}_0}\in \D z) \p(\tau_z^+\le s)}
\\ \label{ltx}
&& =\frac{1}{\theta}\int_0^\infty \int_0^\infty u^{(0)}(x,z)\; \nu (z+\D y)\D ze^{G^{(\theta)}y},
\end{eqnarray}
where
\[u^{(0)}(x,z)=W(x)e^{R z}-W(x-z).\]
\end{theorem}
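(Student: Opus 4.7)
The plan splits naturally into two parts: (i) the renewal decomposition \eqref{PRE}--\eqref{PRE2}, established by a strong Markov argument at the classical ruin time; and (ii) the Laplace transform identity \eqref{ltx}, obtained by combining Fubini with the compensation representation of the deficit sub-distribution.

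For (i), split the event $\{\tau^\zeta=+\infty\}$ according to whether $\tau_0^-=+\infty$ or not. The first branch contributes $\overline{\phi}_i(x)=\p_{x,i}(\tau_0^-=+\infty)$ by Theorem \ref{scaleruinproability}. On $\{\tau_0^-<\infty\}$, introduce the first return to the non-negative half-line $T_0=\inf\{t>\tau_0^-:X_t\ge 0\}$; since $X$ has no positive jumps, $X_{T_0}=0$ almost surely. Conditioning on $(-X_{\tau_0^-},J_{\tau_0^-})=(z,k)$ and using the strong Markov property of $(X,J)$ at $\tau_0^-$ together with the spatial homogeneity of the MAP, the pair $(T_0-\tau_0^-,J_{T_0})$ has the joint law of $(\tau_z^+,J_{\tau_z^+})$ under $\p_k$. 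Parisian ruin is avoided on this branch if and only if $T_0-\tau_0^-<\zeta$ and, by a further strong Markov step at $T_0$, the shifted process $(X_{T_0+\cdot},J_{T_0+\cdot})$ avoids Parisian ruin starting from $(0,J_{T_0})$; this contributes $\p_j(\tau^\zeta=+\infty)$ when $J_{T_0}=j$. Summing over $k,j\in E$ and integrating against the deficit distribution produces \eqref{PRE2} when $x=0$, and the identical argument with arbitrary starting data $(x,i)$ gives \eqref{PRE}.

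For (ii), apply Fubini to interchange the $s$-integral and the deficit integral, then use
\[
\int_0^\infty e^{-\theta s}\p_k(\tau_z^+\le s,\,J_{\tau_z^+}=j)\,\D s
=\frac{1}{\theta}\,\e_k\bigl[e^{-\theta\tau_z^+};J_{\tau_z^+}=j\bigr]
=\frac{1}{\theta}\bigl[e^{G^{(\theta)}z}\bigr]_{kj},
\]
where the last equality is precisely \eqref{eq:Gdef_cont}. This reduces the left-hand side of \eqref{ltx} to the matrix integral of $\tfrac1\theta e^{G^{(\theta)}z}$ against the joint sub-distribution $\p_{x,i}(\tau_0^-<\infty,-X_{\tau_0^-}\in\D z,J_{\tau_0^-}=\cdot)$. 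Applying Corollary \ref{compensationGB1} with $q=0$ and penalty $w(y_1,y_2)=\ind{y_2\in\D z}$, and performing the change of variables $y=w+z$ in the inner integral, represents this sub-distribution as $\int_0^\infty u^{(0)}(x,w)\,\nu(w+\D z)\,\D w$, with $u^{(0)}(x,w)=W(x)e^{Rw}-W(x-w)$ by \eqref{thm:13}. Substituting this and renaming the integration variables yields exactly the right-hand side of \eqref{ltx}.

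The main obstacle is the index/matrix bookkeeping: one must carefully align the phase at the classical ruin instant with the left index of $e^{G^{(\theta)}z}$, and align the pre-jump phase produced by $u^{(0)}$ with the first index of the jump measure $\nu$; only after this identification can the two matrix kernels be spliced into a single product. Once this is in place, the remaining ingredients are standard: the strong Markov property and spatial homogeneity of the MAP for (i), Fubini and the definition of $G^{(\theta)}$ in \eqref{eq:Gdef_cont} for (ii), and the compensation and $q$-potential identities already recorded in Sections \ref{Ger:sec} and \ref{sec:prel}.
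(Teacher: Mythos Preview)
Your proposal is correct and follows essentially the same route as the paper: the strong Markov decomposition at $\tau_0^-$ (and then at the first return to $0$, using spectral negativity so that $X_{T_0}=0$) for \eqref{PRE}--\eqref{PRE2}, and Fubini together with \eqref{eq:Gdef_cont} and the compensation representation of the deficit law (Corollary~\ref{compensationGB1} with \eqref{thm:13}) for \eqref{ltx}. Your account is in fact more explicit than the paper's about the phase bookkeeping needed to splice $u^{(0)}$, $\nu$ and $e^{G^{(\theta)}\cdot}$ into a single matrix product, which is exactly where care is required.
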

\begin{proof}
On the event $\{\tau^\zeta=\infty\}$ we decompose possible trajectory that goes below zero into two parts. The first one starts at the undershoot of $0$ of size, say, $-z<0$ visiting zero
in continuous way because of the spectral negativity of $X$ in a shorter period than $\zeta$. The second part
starts at $0$ after this excursion below $0$.
Using the strong Markov property it will produce:
\begin{eqnarray*}
\lefteqn{\p_{x,i}(\tau^\zeta=\infty)=
\p_{x,i}(\tau_0^-=\infty)}\\\nonumber&&+
\int_0^\infty \p_{x,i}(\tau^{-}_0<\infty, -X_{\tau^{-}_0}\in \D z)\p(\tau_z^+\le \zeta)
\p(\tau^\zeta=\infty).
\end{eqnarray*}
This justifies the equation (\ref{PRE}). System of equations \eqref{PRE2}
follows straightforward from (\ref{PRE}) by taking $x=0$ there.
Finally, note that by \eqref{eq:Gdef_cont},
\begin{equation}\label{LapXeq}\int_0^{\infty}e^{-\theta s}\p(\tau_z^+\le s)\,\D s = \frac{1}{\theta}\e_{x,i}\left(e^{-\theta\tau_z^+},\tau_z^+<\infty\right)
=\frac{1}{\theta}e^{G^{(\theta)}z}.
\end{equation}
Further, from the compensation formula \eqref{compformula} and \eqref{compensationrepresentation}
we have
\begin{eqnarray}\lefteqn{\int_0^{\infty}e^{-\theta s}\,\D s\int_0^\infty
\p_{x,i}(\tau^{-}_0<\infty, -X_{\tau^{-}_0}\in \D z)\p(\tau_z^+\le s)}\nonumber\\&&=
\frac{1}{\theta}\int_0^\infty \p_{x,i}(\tau^{-}_0<\infty, -X_{\tau^{-}_0}\in \D y)e^{G^{(\theta)}y}
\nonumber\\&&=
\frac{1}{\theta}\int_0^\infty \int_0^\infty u^{(0)}(x,z)\; \nu (z+\D y)\D ze^{G^{(\theta)}y}
\label{waznarep}
\end{eqnarray}
which completes the proof.
\end{proof}

We will derive now the Cram\'er's estimate of the Parisian ruin probability.
\begin{theorem}\label{Crameasymptoticspar}
Under Cram\'{e}r condition \eqref{cramerlundbergequation},
\[\lim_{x\to+\infty}\frac{\p_{x,i}(\tau^\zeta<+\infty)}{e^{-\gamma x}}=C^\zeta\]
for some finite constant $C^\zeta>0$.
\end{theorem}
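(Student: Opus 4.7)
My plan is to mirror the proof of Theorem \ref{Crameasymptotics}, augmented by the Parisian decomposition from Theorem \ref{ThmMainpar}. First I would isolate the dependence on the initial capital $x$ by conditioning at the classical ruin epoch. Set $\alpha_j = \p_j(\tau^\zeta<\infty)$, which is determined by the system \eqref{PRE2}. A strong Markov argument at $\tau_0^-$, splitting according to whether the first negative excursion persists at least $\zeta$ time units or returns to zero earlier, gives
\begin{equation*}
\p_{x,i}(\tau^\zeta<\infty) = \e_{x,i}\!\left[g_{J_{\tau_0^-}}(|X_{\tau_0^-}|);\, \tau_0^-<\infty\right],
\end{equation*}
where
\begin{equation*}
g_k(z) := \p_k(\tau_z^+>\zeta) + \sum_{j\in E}\p_k(\tau_z^+\le\zeta, J_{\tau_z^+}=j)\,\alpha_j \in [0,1]
\end{equation*}
does not depend on $x$.

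Next I would apply the exponential change of measure \eqref{expomeasureruin}. Since under $\tilde{\p}$ the risk process drifts to $-\infty$, the stopping time $\tau_0^-$ is $\tilde{\p}$-a.s.\ finite, and the Radon--Nikodym calculation already exploited in the proof of Theorem \ref{Crameasymptotics} yields
\begin{equation*}
\frac{\p_{x,i}(\tau^\zeta<\infty)}{e^{-\gamma x}} = \sum_{k\in E}\frac{h_i(\gamma)}{h_k(\gamma)}\,\tilde{\e}_{x,i}\!\left[e^{-\gamma|X_{\tau_0^-}|}\, g_k(|X_{\tau_0^-}|);\, J_{\tau_0^-}=k\right].
\end{equation*}
Passing to the dual MAP $\hat{X}=-X$ starting from $0$ identifies $(|X_{\tau_0^-}|,J_{\tau_0^-})$ under $\tilde{\p}_{x,i}$ with the overshoot--phase pair $(\hat{X}_{\hat{\tau}_x^+}-x, J_{\hat{\tau}_x^+})$ under $\tilde{\p}_{0,i}$. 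I would then invoke the Markov renewal theorem of \citet{dereich2015real} (exactly as in the proof of Theorem \ref{Crameasymptotics}) to obtain weak convergence of this pair as $x\to\infty$. Because $(z,k)\mapsto e^{-\gamma z}g_k(z)$ is bounded by $1$ and continuous in $z$, the right-hand side converges to a finite constant $C^\zeta$ (with an implicit $i$-dependence through the prefactor $h_i(\gamma)$).

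Positivity follows since $g_k(0)=\alpha_k>0$ and $g_k(z)\ge\p_k(\tau_z^+>\zeta)>0$ for every $z>0$, so the integrand is strictly positive against a limiting probability distribution. The main obstacle I anticipate is verifying continuity of $z\mapsto g_k(z)$, which reduces to continuity of the joint law of $(\tau_z^+,J_{\tau_z^+})$ in $z$. This hinges on the spectral negativity of $X$: the dual $\hat{X}$ has no negative jumps, so $\hat{X}$ creeps continuously up to every positive level, $\tau_z^+$ is a.s.\ continuous in $z$, and combined with right-continuity of $J$ at these creeping epochs one obtains the required continuity of $g_k$.
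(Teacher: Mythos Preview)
Your proposal is correct and follows essentially the same route as the paper: both use the strong Markov decomposition at $\tau_0^-$ from Theorem~\ref{ThmMainpar}, apply the exponential change of measure \eqref{expomeasureruin}, pass to the dual $\hat X$, and invoke the Markov renewal theorem of \citet{dereich2015real}. The only cosmetic difference is packaging: the paper writes $\p_{x,i}(\tau^\zeta<\infty)=\p_{x,i}(\tau_0^-<\infty)-\int_0^\infty\p_{x,i}(\tau_0^-<\infty,-X_{\tau_0^-}\in\D z)\p(\tau_z^+\le\zeta)\p(\tau^\zeta=\infty)$ and treats the two terms separately (the first via Theorem~\ref{Crameasymptotics}, the second via the renewal theorem plus Tonelli), whereas you bundle everything into the single bounded functional $g_k(z)=1-\sum_j\p_k(\tau_z^+\le\zeta,J_{\tau_z^+}=j)\p_j(\tau^\zeta=\infty)$ and apply the renewal limit once; your extra care about continuity of $z\mapsto g_k(z)$ is a detail the paper leaves implicit.
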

 \begin{proof}
We follow the same idea like in the proof of Theorem \ref{Crameasymptotics}.
That is, from \eqref{PRE}
\begin{eqnarray} \label{PREx}
&&\qquad\p_{x,i}(\tau^\zeta<+\infty)=
\p_{x,i}(\tau_0^-<+\infty)\\\nonumber&&-
\int_0^\infty \p_{x,i}(\tau^{-}_0<\infty, -X_{\tau^{-}_0}\in \D z)\p(\tau_z^+\le \zeta)\p(\tau^\zeta=+\infty)\\
&&\qquad=e^{-\gamma x}\tilde{\e}_{0,i}\left[e^{\gamma (\hat{X}_{\hat{\tau}_x^+}-x)}\frac{h_{i}(\gamma)}{h_{J_{\hat{\tau}_x^+}}(\gamma)}\right]
\\\nonumber&&-
e^{-\gamma x}\int_0^\infty
\tilde{\e}_{0,i}\left[e^{\gamma (\hat{X}_{\hat{\tau}_x^+}-x)}\frac{h_{i}(\gamma)}{h_{J_{\hat{\tau}_x^+}}(\gamma)}, (\hat{X}_{\hat{\tau}_x^+}-x)\in \D z\right]
\p(\tau_z^+\le \zeta)\p(\tau^\zeta=+\infty).
\end{eqnarray}
Using Renewal Theorem 28 of
 \citet{dereich2015real} and Tonelli theorem complete the proof.
\end{proof}

We will  move now to MAP risk process considered in Section
\ref{sec:subexpnentialasruinprob} in which
$X$ is a Markov modulated drift minus compound Poisson process
and claim size are subexponential.

\begin{theorem}\label{Thmconveq}
Under assumptions of Theorem \ref{simple} we have
\begin{displaymath}
  \lim_{x\to\infty}\frac{\p_{y,i}(\tau^\zeta<+\infty)}{\oo{F^{\text{\tiny \rm I}}}(x)}
  = \frac{C}{\overline{a}}
\end{displaymath}
for $C$ and $\overline{a}$ given in \eqref{eq:3} and \eqref{driftMMRW}, respectively.
\end{theorem}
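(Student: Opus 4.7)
The approach is to compare $\p_{x,i}(\tau^\zeta<\infty)$ with the classical ruin probability $\phi_i(x)$ via Theorem~\ref{ThmMainpar} and to show that the correction term is negligible relative to $\oo{F^{\text{\tiny \rm I}}}(x)$, at which point Theorem~\ref{simple} delivers the asymptotic. Writing~\eqref{PRE} in the expanded form~\eqref{PRE2} complementarily gives
\begin{equation*}
\p_{x,i}(\tau^\zeta<\infty)=\phi_i(x)-\mathrm I_i(x),
\end{equation*}
with
\begin{equation*}
\mathrm I_i(x)=\sum_{j,k\in E}\int_0^\infty \p_{x,i}(\tau_0^-<\infty,\,-X_{\tau_0^-}\in\D z,\,J_{\tau_0^-}=k)\,\p_k(\tau_z^+\le\zeta,\,J_{\tau_z^+}=j)\,\p_j(\tau^\zeta=\infty).
\end{equation*}
A first useful observation truncates the $z$-integration: since $X$ has non-positive jumps and premium rates are bounded, $X_t\le (\max_{j\in E}p_j)\,t$ starting from $0$, so $\tau_z^+\ge z/\max_{j\in E}p_j$ almost surely and $\p_k(\tau_z^+\le\zeta,\cdot)=0$ for $z>M:=\zeta\max_{j\in E}p_j$. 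Bounding the remaining non-negative factors by $1$ yields $0\le \mathrm I_i(x)\le \p_{x,i}(\tau_0^-<\infty,\,-X_{\tau_0^-}\le M)$, so the whole task reduces to showing that this tail quantity is $o(\oo{F^{\text{\tiny \rm I}}}(x))$ as $x\to\infty$.

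The decisive step is therefore the subexponential joint estimate. Via Theorem~\ref{MMRW} the event rewrites in the random-walk picture as $\{\max_n S_n>x,\,R(x)\le M\}$ with overshoot $R(x)=S_{N(x)}-x$ of the Markov modulated random walk~\eqref{MMRW}. I would establish it by sharpening the single-big-jump heuristic that underlies~\eqref{eq:105} and Theorem~\ref{simple}: under (D1)--(D3) the dominant configurations are those in which a single step $\xi_n$ just barely crosses $x$, exceeding $x-S_{n-1}$ by no more than $M$. Summing $\e_i[\oo F(x-S_{n-1})-\oo F(x+M-S_{n-1});\,\max_{\ell\le n-1}S_\ell\le x]$ along the LLN trajectory $S_{n-1}\approx-\overline a(n-1)$ and using (D2) to trade $\oo F$ for $\oo F^{\text{\tiny \rm I}}$ on the Lebesgue integral in $t$ produces
\begin{equation*}
\p_i\bigl(\max_n S_n>x,\,R(x)\le M\bigr)\sim \frac{C_S}{\overline a}\bigl(\oo{F^{\text{\tiny \rm I}}}(x)-\oo{F^{\text{\tiny \rm I}}}(x+M)\bigr),
\end{equation*}
which is $o(\oo{F^{\text{\tiny \rm I}}}(x))$ because $F^{\text{\tiny \rm I}}\in\mathcal S$ is long-tailed, so $\oo{F^{\text{\tiny \rm I}}}(x+M)\sim\oo{F^{\text{\tiny \rm I}}}(x)$. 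Combining with Theorem~\ref{simple} then gives $\p_{x,i}(\tau^\zeta<\infty)=\phi_i(x)+o(\oo{F^{\text{\tiny \rm I}}}(x))\sim(C/\overline a)\oo{F^{\text{\tiny \rm I}}}(x)$, as claimed.

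The main obstacle is making the single-big-jump computation of the middle step rigorous in the Markov modulated setting. In the $N=1$ case the joint tail asymptotic for ruin time and overshoot is classical and can be read off from Asmussen--Kl\"uppelberg type results, but for the MAP risk process one either has to revisit the proof of Theorem~\ref{simple} in~\citet{FKZ} carrying along a marker for the overshoot (and the phase $J_{\tau_0^-}$) in every estimate, or to argue indirectly by covering $\{-X_{\tau_0^-}\le M\}$ with finitely many configurations whose ruin-causing claim lies in a bounded window and invoking (D2) together with the long-tail property of $F^{\text{\tiny \rm I}}$ to estimate each such contribution as $o(\oo{F^{\text{\tiny \rm I}}}(x))$; either route requires care since the ladder height process of a MAP is itself Markov modulated.
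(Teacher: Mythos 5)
Your proposal is correct and follows essentially the same route as the paper's proof: both reduce the problem, via Theorem \ref{ThmMainpar} and the observation that the deficit must be at most $\zeta\max_{j\in E}p_j$ for the process to recover within time $\zeta$, to showing $\p_{x,i}(\tau_0^-<\infty,\,-X_{\tau_0^-}\le p)=\mathrm{o}(\oo{F^{\text{\tiny \rm I}}}(x))$, and both establish this by the single-big-jump principle (the paper cites inequalities (110)--(114) of \citet{FKZ}) combined with the long-tail property $\oo{F^{\text{\tiny \rm I}}}(x+p)\sim\oo{F^{\text{\tiny \rm I}}}(x)$. The technical obstacle you flag at the end is exactly the step the paper delegates to \citet{FKZ}.
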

\begin{proof}
Note that from definition of Parisian ruin time if follows that
\[\p_{x,i}(\tau^\zeta<+\infty)\le \p_{x,i}(\tau_0^-<+\infty).\]
Moreover, conditioned that risk process $X$ got ruined, the deficit cannot be larger that
$p=\max_{i\in E}p_i \zeta$, otherwise it will not manage to return to zero.
Thus by Theorem \ref{ThmMainpar}
\begin{eqnarray*}
\lefteqn{\p_{x,i}(\tau^\zeta=+\infty)=
\p_{x,i}(\tau_0^-=+\infty)}\\\nonumber&&+
\int_0^p \p_{x,i}(\tau^{-}_0<\infty, -X_{\tau^{-}_0}\in \D z)\p(\tau_z^+\le \zeta)\p(\tau^\zeta=+\infty)
\end{eqnarray*}
and hence
\begin{eqnarray*}
\lefteqn{\p_{x,i}(\tau^\zeta<+\infty)\ge
\p_{x,i}(\tau_0^-<+\infty)}\\\nonumber&&-
\p_{x,i}(\tau^{-}_0<\infty, -X_{\tau^{-}_0}\le p).
\end{eqnarray*}
Further,
\begin{align*}
\p_{x,i}(\tau^{-}_0<\infty, -X_{\tau^{-}_0}\le p)
&= \p_{x,i}(\tau^{-}_0<\infty)-\p_{x,i}(\tau^{-}_0<\infty, -X_{\tau^{-}_0}> p)\\&
\le\p_i(M>x)\\&-\sum_{n=1}^\infty\p_i(\max_{k\le n-1}S_k<x, S_{n-1}\ge -a_{n-1}, \xi_n>x+a_{n-1}+p),
\end{align*}
where $S_k$ is defined in \eqref{Sk} and $a_n=a_0+(\overline{a}+\epsilon)n$ for some $a_0, \epsilon>0$.
Now using principle of one big jump (see inequalities (110)-(114) of  \citet{FKZ})
and fact that any subexponential distribution is long-tailed
one get that
\[\sum_{n=1}^\infty\p_i(\max_{k\le n-1}S_k<x, S_{n-1}\ge -a_{n-1}, \xi_n>x+a_{n-1}+p)
\ge (1+{\rm o}(1))\frac{C}{\overline{a}}\oo{F^{\text{\tiny \rm I}}}(x).\]
Thus
\[\p_{x,i}(\tau^{-}_0<\infty, -X_{\tau^{-}_0}\le p)={\rm o}(\oo{F^{\text{\tiny \rm I}}}(x))\]
and this gives the assertion of the theorem.
\end{proof}

\bibliographystyle{plainnat}

\end{document}